\newcommand{\bd}{\begin{displaymath}}
\newcommand{\be}{\begin{equation}}
\newcommand{\ba}{\begin{array}}
\newcommand{\ed}{\end{displaymath}}
\newcommand{\ee}{\end{equation}}
\newcommand{\ea}{\end{array}}
\newcommand{\espace}{\mbox{ }}
\newcommand{\Prob}{{\rm I\hspace{-0.8mm}P}}
\newcommand{\Exp}{{\rm I\hspace{-0.8mm}E}}
\newcommand{\indicator}[1]{{\mbox{\large\bf$1$}}_{#1}}
\def\N{\mathbb{N}}
\def\Z{\mathbb{Z}}
\def\R{\mathbb{R}}
\newcommand{\eqref}[1]{(\ref{#1})}
\newtheorem{theorem}{Theorem}[section]
\newtheorem{proposition}{Proposition}[section]
\newtheorem{lemma}{Lemma}[section]
\newtheorem{corollary}{Corollary}[section]
\newenvironment{proof}[2]{\espace\\{\em Proof of #1 \ref{#2}.}}{\hfill\mbox{$\square$}}
\begin{document}
\title{Supercriticality conditions for asymmetric zero-range process with sitewise disorder}
\author{C. Bahadoran$^{a}$, T. Mountford$^{b}$, K. Ravishankar$^{c}$, E. Saada$^{d}$}

\date{}
\maketitle
$$ \ba{l}
^a\,\mbox{\small Laboratoire de Math\'ematiques, Universit\'e Blaise Pascal, 63177 Aubi\`ere, France} \\
\quad \mbox{\small e-mail:
bahadora@math.univ-bpclermont.fr}\\
^b\, \mbox{\small Institut de Math\'ematiques, \'Ecole Polytechnique F\'ed\'erale, 
 
Lausanne, Switzerland
} \\
\quad \mbox{\small e-mail:
thomas.mountford@epfl.ch}\\
^c\, \mbox{\small Dep. of Mathematics, SUNY,
College at New Paltz, NY, 12561, USA} \\
\quad \mbox{\small e-mail:
 ravishak@newpaltz.edu}\\
^d\, \mbox{\small CNRS, UMR 8145, MAP5,
Universit\'e Paris Descartes,

Sorbonne Paris Cit\'e, France}\\
\quad \mbox{\small e-mail:
Ellen.Saada@mi.parisdescartes.fr}\\ \\
\ea
$$
\begin{abstract}
We discuss necessary and sufficient conditions for the convergence of disordered asymmetric 
zero-range process to the critical invariant measures.
\end{abstract}
\section{Introduction}
\label{sec_intro}
We first discuss the aims of this paper and our results without formal statements or definitions, 
which will be given in the following section.
We study inhomogenous zero range processes (to be defined in the next section) and the question 
of convergence to upper invariant measures.
Zero range processes  (see e.g. \cite{and})  are conservative particle systems, a class containing 
the much studied exclusion processes (\cite{lig}).  In this case, to our best knowledge necessary and sufficient
conditions are hard to find.  In the setting of homogenous processes with product measures for 
initial conditions one can profit from hydrodynamic limits and argue convergence as in \cite{L}.
This elegant approach is not relevant here as our systems are not translation invariant and we are 
faced with deterministic initial conditions.  The lack of translation invariance also hampers the 
approach of \cite{EG}.  The exclusion process starting from fixed initial conditions but having an 
asymptotic density is treated in \cite{BM} and gives a robust criterion for weak convergence but a 
necessary and sufficient condition for weak convergence to a given equilibrium seems difficult.  
Thus it is of interest to be able to, in some reasonable circumstances, give a necessary and 
sufficient condition for weak convergence to a particular equilibrium.   Other works which 
investigate convergence to equilibrium of zero range processes, albeit from a different standpoint, 
include \cite{GG} and \cite {JLQY}.
This article considers one of the principal results of \cite{afgl}
concerning the totally asymmetric nearest neighbour zero range process
having inhomogeneous ``service rates" $\alpha(x),  x\in \Z$ so that at rate 
$\alpha(x)$  one of the particles currently at site $x $ (if any) will move to site $x+1$.
It was postulated that \\ \\
\textit{(i)} there existed $ 0 < c < 1$ so that for every $x, \ \alpha(x) \in (c,1]$, \\
\noindent
\textit{(ii)} for every flux value $\lambda \in [0, c)$, there existed a particle density, 
$\overline{R}( \lambda ) $, \\
\noindent
\textit{(iii)} the liminf of $\alpha (x) $ as $x$ tended to - infinity was equal to $c$, \\
\noindent
\textit{(iv)} the (increasing) limit of  $\overline{R}( \lambda )$ as $\lambda$ increased to $c$, 
denoted $\overline{R}(c)$, was finite.\\ \\
Under condition \textit{(i)}, for each $0 \leq \lambda \leq c$, the measure $\nu_\lambda $, 
under which $\eta(x) $ were independently distributed as Geometric random variables with
parameter ${\lambda }/\alpha(x)$, was an equilibrium for the zero range process.  
Property \textit{(ii)} ensured that for each  
$\lambda\in[0,c)$, $\nu_\lambda$-a.s.,
$$
\lim_{n \rightarrow \infty} \frac{1}{n} \sum_{x=-n}^ {-1} \eta(x) \ = \
\lim_{n \rightarrow \infty} \frac{1}{n} \sum_{x=-n}^ {-1} \frac{\lambda}{\alpha(x) - \lambda }   
=: \overline{R}(\lambda )
$$
A motivating special case was where the values $\alpha (x)$ were i.i.d. (or ergodic) 
random variables with marginal law $Q$.  In this case the assumptions amount to law $Q$
being supported by $(c,1]$ and satisfying 
$$
\rho_c:=\int_c^ 1 \frac{c}{\alpha - c } Q(d \alpha ) \ < \ \infty 
$$
Given an equilibrium $\nu_\lambda$, it is natural to ask for which initial configurations, 
$\eta_0$, is it the case that for the zero range process $(\eta_t)_{t \geq 0}$
beginning at $\eta_0$, $\eta_t $ converges in distribution to $\nu_\lambda $ as $t\to +\infty$.  
The paper \cite{afgl} concerns the totally asymmetric constant rate zero-range process, defined by
\be\label{special_zrp}
g(n)=\indicator{\{n\geq 1\}},\quad p(1)=1
\ee
The result of \cite{afgl} is that, if
$$
 \liminf _{n \rightarrow \infty}  \frac{1}{n} \ \sum_{x=-n} ^{-1} \eta_0(x) \ > \rho_c,
$$
then $\eta_t $ converges in distribution to $\nu_c$.\\ \\
Our objective is to complete this statement in the following ways.
First, we consider rate functions $g(.)$ that increase to a finite limiting value 
and satisfying a weak concavity condition (H) (to be stated in the next section).
Next, we allow non totally asymmetric nearest neighbour random walk kernels.
We address the following natural questions.
On the one hand, we prove that the condition 
\be\label{cond_init}
\liminf _{n \rightarrow \infty}  \frac{1}{n} \ \sum_{x=-n} ^{-1} \eta_0(x)  \geq \rho_c
\ee
is necessary for weak convergence.
On the other hand, we prove that condition \eqref{cond_init} does not imply 
convergence for jump kernels that are not nearest-neighbour. Note that the latter 
result is in sharp contrast with known result for the homogeneous simple exclusion process.
The previous results indicate that it is reasonable to seek a generalization of the result 
of \cite{afgl} to nearest-neighbour jump kernels and more general rate functions $g(.)$. 
In this paper, we will establish a 
 general upper bound, and provide some ideas of how to prove a lower bound, using new hydrodynamic 
 limit results established in \cite{bmrs2}. These new ideas show that convergence is actually implied 
 by the weaker supercriticality condition \eqref{cond_init},
which is another improvement of the result of \cite{afgl}. This leads to the conclusion that 
\eqref{cond_init} is a necessary and sufficient condition.
To our knowledge this is the first such condition given for conservative systems where the kernel 
governing particle motion has nonzero mean.  The article \cite{BM} gives a robust condition for 
convergence to a translation invariant extremal equilibrium
for exclusion processes but, seemingly, finding a necessary  and sufficient condition is more subtle.
\section{Notation and results}
\label{sec_results}
In the sequel,  $\R$ denotes the set of real numbers, $\Z$  the set of signed integers 
and $\N=\{0,1,\ldots\}$ the set of nonnegative integers. For $x\in\R$,  $\lfloor x\rfloor$  
denotes the integer part of $x$, that is largest integer $n\in\Z$ such that $n\leq x$. 
The notation $X\sim\mu$ means that a random variable $X$ has probability distribution $\mu$.\\ \\
 Fix some $c\in(0,1)$. An environment (or disorder) is a $(c,1]$-valued sequence 
 $\alpha=(\alpha(x),\,x\in\Z)$. The set of environments
is denoted by ${\bf A}:=(c,1]^\Z$. From now on, we assume that 
\be
\label{not_too_sparse}
\liminf_{x\to-\infty}\alpha(x)=c
\ee
Let $g:\N\to[0,+\infty)$ be a nondecreasing function such that
\be\label{properties_g}
g(0)=0<g(1)\leq \lim_{n\to+\infty}g(n)=:g_\infty<+\infty
\ee
We extend $g$ to  $\overline{\N}:=\N\cup\{+\infty\}$  by setting $g(+\infty)=g_\infty$.
Without loss of generality, we henceforth assume $g_\infty=1$.
Let ${\bf X}:=\overline{\N}^\Z$ denote the set of particle configurations.
  A configuration is of the form $\eta=(\eta(x):\,x\in\Z)$ where $\eta(x)\in\overline{\N}$ for each $x\in\Z$.
Let $p(.)$ be a probability measure on $\Z$.
We consider the Markov process  $(\eta_t^\alpha)_{t\geq 0}$
on ${\bf X}$ with generator given for any cylinder function $f:{\bf X}\to\R$ by
\be\label{generator}
L^\alpha f(\eta)  =  \sum_{x,y\in\Z}\alpha(x)
p(y-x)g(\eta(x))\left[
f\left(\eta^{x,y}\right)-f(\eta)
\right]\ee
where, if $\eta(x)>0$, $\eta^{x,y}:=\eta-\delta_x+\delta_y$ denotes the new 
configuration obtained from $\eta$ after a particle has jumped from $x$ to $y$
(configuration $\delta_x$ has one particle at $x$ and no particle elsewhere; 
addition of configurations is meant coordinatewise).  In cases of infinite particle number,  
the following interpretations hold: $\eta^{x,y}=\eta-\delta_x$ if 
$\eta(x)<\eta(y)=+\infty$, $\eta^{x,y}=\eta+\delta_y$ if $\eta(x)=+\infty>\eta(y)$,
$\eta^{x,y}=\eta$ if $\eta(x)=\eta(y)=+\infty$. \\ \\
 Because $g$ is nondecreasing, $(\eta_t^\alpha)_{t\geq 0}$ is an attractive process (\cite{and}). \\ \\
For $\lambda<1$, we define the probability measure $\theta_\lambda$ on $\N$
 by \label{properties_a}
\begin{equation}\label{eq:theta-lambda}
\theta_\lambda(n):=Z(\lambda)^{-1}\frac{\lambda^n}{g(n)!},\quad n\in\N
\end{equation}
where $g(n)!=\prod_{k=1}^n g(k)$ for $n\geq 1$, $g(0)!=1$, and $Z(\lambda)$ is the normalizing factor:
$$
Z(\lambda):=\sum_{n=0}^{+\infty}\frac{\lambda^n}{g(n)!}
$$
We extend $\theta_\lambda$ into a probability measure on $\overline{\N}$ by setting $\theta_\lambda(\{+\infty\})=0$.
For $\lambda\leq c$, we denote by $\mu_\lambda^\alpha$ the invariant measure of 
$L^\alpha$ defined (see e.g. \cite{bfl}) as the product measure on $\bf X$ with one-site marginal
$\theta_{\lambda/\alpha(x)}$. 
Since $(\theta_\lambda)_{\lambda\in [0,1)}$ is an exponential family, 
the mean value of $\theta_\lambda$, given by 
$$
R(\lambda):=\sum_{n=0}^{+\infty}n\theta_\lambda(n)
$$
is a $C^\infty$ increasing function from $[0,1)$ to $[0,+\infty)$.
The quenched mean particle density at $x$ under $\mu_\lambda^\alpha$ is defined by
$$
R^\alpha(x,\lambda):=\Exp_{\mu_\lambda^\alpha}[\eta(x)]=R\left(\frac{\lambda}{\alpha(x)}\right)
$$
In order to  define a notion of critical density, we assume existence of an annealed 
mean density to the left of the origin:
\be\label{average_afgl}
\overline{R}(\lambda):=\lim_{n\to+\infty}n^{-1}\sum_{x=-n}^0 R\left(\frac{\lambda}{\alpha(x)}\right)\quad\mbox{exists for every }\lambda\in[0,c)
\ee
The function $\overline{R}$ is increasing and  $C^\infty$ function on $[0,c)$ (see Lemma \ref{lemma_properties_flux} below).
We define the critical density by 
\be\label{other_def_critical}
\rho_c:=\overline{R}(c-):=\lim_{\lambda\uparrow c}\overline{R}(\lambda)\in[0,+\infty]
\ee
Note that formally, one is tempted to define $\overline{R}(c)$ by plugging 
$\lambda=c$ into \eqref{average_afgl}.
However the corresponding limit may have a different value than $\rho_c$, or even not exist.
In fact, it can be made non-existent or given any value in $[\rho_c,+\infty)$ by modifying 
the $\alpha(x)$ for $x$ 
in a zero-density subset of $\N$, an operation which does not modify the value 
\eqref{average_afgl} nor the value of $\rho_c$. 
The only stable property with respect to such change is that
$$
\liminf_{n\to+\infty}\frac{1}{n+1}\sum_{x=-n}^0 R\left(\frac{c}{\alpha(x)}\right)\geq\rho_c
$$
It is thus natural, and it will be our choice in the sequel, to extend $\overline{R}$ 
by continuity to $[0,c]$ by defining 
\be
\label{extend_R}
\overline{R}(c):=\overline{R}(c-)=\rho_c
\ee
One additional assumption for Theorem \ref{convergence_maximal} and Proposition 
\ref{prop_main_work} below will be
finiteness of the critical density:
\be\label{assumption_pt}\rho_c<+\infty\ee
We can now state our results. With the exception of Theorem \ref{prop_counter}, we will consider   
a nearest-neighbour jump kernel with non-zero drift, that is 
\be\label{nn_hyp}
p(1)=p\in(1/2,1],\quad p(-1)=q:=1-p
\ee
This is not a technical artefact, see   
Theorem \ref{prop_counter} below. 
In the forthcoming statements, $\eta_0\in\N^\Z$ denotes the initial particle configuration,
and $(\eta_t^\alpha)_{t\geq 0}$ the evolved quenched process with generator \eqref{generator} starting from $\eta_0$ in the environment $\alpha\in{\bf A}$. 
Our general problem is to determine whether, and for what kernels $p(.)$, rate functions 
$g(.)$ and environments $\alpha(.)$, the supercriticality condition \eqref{cond_init}, 
with $\rho_c$ defined by \eqref{other_def_critical}, is necessary and sufficient for the convergence
\be\label{eq:convergence_critical} 
\eta_t\longrightarrow\mu^\alpha_c\mbox{ in distribution as }t\to+\infty
\ee
The study of \eqref{eq:convergence_critical}  can be decomposed into an upper bound 
and a lower bound. For the former, we prove in Section \ref{sec_afgl} the following result.
\begin{theorem}\label{proposition_afgl}
Assume \eqref{not_too_sparse} and \eqref{nn_hyp}.
 every $\eta_0\in\N^\Z$ and every bounded local nondecreasing function $h:{\bf X}\to\R$,  
\be\label{upper_bound}
\limsup_{t\to\infty}\Exp h(\eta_t^\alpha)\leq\int_{\bf X} h(\eta)d\mu_c^\alpha(\eta)
\ee
\end{theorem}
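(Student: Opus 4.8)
The plan is to exploit attractivity of the process together with the fact that $\mu_c^\alpha$ is invariant, reducing the upper bound to a statement about a process started from a configuration that dominates $\eta_0$ and whose law we can control. Since $h$ is nondecreasing and the process is attractive (as noted after \eqref{generator}, because $g$ is nondecreasing), it suffices to bound $\Exp h(\eta_t^\alpha)$ from above by the same quantity computed for a conveniently chosen \emph{larger} initial configuration. The natural candidate is a configuration distributed according to $\mu_c^\alpha$ itself, or one stochastically dominating $\eta_0$ while still being dominated (in the limit) by $\mu_c^\alpha$; the inequality \eqref{upper_bound} would then follow from invariance, since $\Exp_{\mu_c^\alpha} h(\eta_t^\alpha) = \int h \, d\mu_c^\alpha$ for all $t$.

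The difficulty is that an arbitrary $\eta_0 \in \N^\Z$ need not be dominated by a typical configuration under $\mu_c^\alpha$, so a direct coupling from below fails. First I would therefore truncate and localize: fix a large box and a height level $K$, and compare the process to an auxiliary process restricted to configurations of height at most $K$, or started from $\eta_0$ capped at a finite region. Because $h$ is local and bounded, controlling the process on a finite window through which information can propagate up to time $t$ is enough, and the nearest-neighbour, finite-speed-of-propagation structure implied by \eqref{nn_hyp} together with $g_\infty = 1$ (bounded rates) lets me restrict attention to a spatial window growing linearly in $t$. The key mechanism is that the asymmetry $p > q$ in \eqref{nn_hyp} produces a net drift to the right, so that excess mass to the left of the observation window is transported away rather than accumulating; combined with \eqref{not_too_sparse}, which guarantees service rates arbitrarily close to $c$ infinitely often to the left, this should force the local statistics to be bounded above by the critical equilibrium.

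The main technical step, and the one I expect to be the principal obstacle, is establishing that no local ``overshoot'' above the critical density can persist: one must show that any local excess over the $\mu_c^\alpha$-profile is dissipated by the drift and cannot be replenished, which is essentially where the criticality of $c$ (via \eqref{other_def_critical} and the saturation $g_\infty=1$) enters. Concretely, I would construct a coupling of $(\eta_t^\alpha)$ with a stationary process of law $\mu_c^\alpha$ and track the sign of the discrepancies; attractivity keeps the number of discrepancies from increasing, and the drift pushes positive discrepancies rightward out of any fixed window. The delicate point is ruling out the possibility that the left reservoir feeds a positive density of particles into the window faster than the drift removes them; here one uses that $\liminf_{x\to-\infty}\alpha(x)=c$ forces the left environment to be ``maximally congested'' along a subsequence, so the incoming flux is capped by the critical flux associated with $c$. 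Making this flux-comparison rigorous — quantifying the balance between incoming current from the left and outgoing current under the drift, uniformly in $t$ — is the crux, and I would expect it to require a current estimate or a monotonicity argument rather than an explicit computation.
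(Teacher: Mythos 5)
Your intuition about the mechanism is the right one (slow sites to the left act as bottlenecks capping the incoming flux near $c$, attractivity lets you pass to a dominating configuration, and locality of $h$ lets you work in a window), but the proposal stalls exactly where you say the crux is, and the step you sketch to get past it does not work. The paper's device is constructive and quite different from your discrepancy coupling: for each $\varepsilon>0$ it takes the bottleneck sites $l=A_\varepsilon=\max\{x\le 0:\alpha(x)\le c+\varepsilon\}$ and a suitable $r'$ to the right, and replaces $\eta_0$ there by $+\infty$ particles. This dominates $\eta_0$ (attractivity), and --- crucially --- it \emph{decouples} the interval $(l,r')$ from the rest of the lattice: the restricted process is an open Jackson network with entrance rates $p\alpha(l)$, $q\alpha(r')$, whose invariant measure is an explicit product measure with fugacity profile $\lambda^{\alpha,l,r'}(x)$ solving a discrete harmonic equation; since $\alpha(A_\varepsilon)\to c$, this profile converges uniformly on compacts to $c$, giving \eqref{upper_bound} in the limit $\varepsilon\to 0$. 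So the proof \emph{is} an explicit computation of the kind you hoped to avoid, and it is precisely the explicit invariant measure (positive recurrence of the localized dynamics) that converts the flux cap into a bound on the local \emph{distribution}.

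The gap in your route is twofold. First, coupling with a stationary $\mu_c^\alpha$ process produces infinitely many discrepancies (an arbitrary $\eta_0\in\N^\Z$ satisfies no density bound), and the standard attractivity statement --- discrepancies do not increase in number --- gives nothing in that setting; nor is there any argument that positive discrepancies are swept out of a fixed window rather than continually re-entering from the left, especially since $g$ is bounded ($g_\infty=1$), so large piles dissipate at rate at most $1$ regardless of their size. Second, even granting the flux cap $p(c+\varepsilon)$ across $A_\varepsilon$, a bound on the time-integrated current into a region does not bound the local law from above by $\mu_c^\alpha$: mass entering at rate just below the critical current can still accumulate without a recurrence/equilibration statement for the localized dynamics. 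That statement is exactly what the Jackson-network lemma (Lemma \ref{lem-mu_c'}) supplies, together with the small environment modification at $r'$ needed to guarantee positive recurrence when condition \eqref{cond_rec} fails.
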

The upper bound \eqref{upper_bound} was established in \cite{fs} for i.i.d. environments, 
in any space dimension and for any (not necessarily nearest-neighbour)
jump kernel $p(.)$ with nonzero drift, under an additional assumption on the initial configuration:
\be
\label{cond_fs}
\sum_{n\in\N}e^{-\beta n}\sum_{x:\,|x|=n}\eta_0(x)<+\infty,\quad\forall\beta>0
\ee
In one dimension with nearest-neighbour jumps, we  
prove Theorem \ref{proposition_afgl} without  assumption
 \eqref{cond_fs}, and under the  weak  
 assumption \eqref{assumption_afgl}
 on the environment. This is done by extending an argument used in
\cite{afgl} in the special case $p=1$ and $g(n)=\indicator{\{n\geq 1\}}$.\\ \\
Our next result shows that the supercriticality condition \eqref{cond_init} is in fact 
necessary if we slightly strengthen \eqref{not_too_sparse} by assuming that
the slow sites (i.e. with rates close to $c$) are not too sparse in the following sense: 
there exists a sequence $(x_n)_{n\in\N}$ of sites such that
\be\label{assumption_afgl}
x_{n+1}<x_n<0,\quad \lim_{n\to +\infty}\frac{x_{n+1}}{x_n}=1,\quad \lim_{n\to+\infty}\alpha(x_n)=c
\ee
\begin{theorem}\label{prop_nec}
Assume  \eqref{average_afgl}, 
\eqref{nn_hyp} and \eqref{assumption_afgl}. Assume further that $\eta_0$ satisfies
\be\label{eq_non16}
\rho=\liminf_{n\to\infty}n^{-1}\sum_{x=-n}^0\eta_0(x)< \rho_c
\ee
 Then 
$\eta_t^\alpha$
does not converge in distribution to $\mu_c^\alpha$ as $t\to+\infty$.
\end{theorem}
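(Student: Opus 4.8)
Suppose, for contradiction, that $\eta_t^\alpha\to\mu_c^\alpha$ in distribution as $t\to+\infty$. The plan is to show that, under \eqref{eq_non16}, the mean particle density of $\eta_t^\alpha$ to the left of the origin cannot approach the critical value $\rho_c$, which is incompatible with convergence to $\mu_c^\alpha$. The argument is a contradiction between a lower bound on the left density forced by the assumed convergence and an a priori upper bound forced by the subcritical initial supply together with the rightward drift.

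\textbf{Step 1 (lower bound from the assumed convergence).} I would first record what convergence to $\mu_c^\alpha$ forces on the left density. Since $\mu_c^\alpha$ is the product measure with marginals $\theta_{c/\alpha(x)}$, its mean profile is $R(c/\alpha(x))$, and by the stable-liminf property already noted in the excerpt, $\liminf_n\frac1{n+1}\sum_{x=-n}^0 R(c/\alpha(x))\geq\rho_c$. Hence convergence should yield $\liminf_n\liminf_t\frac1{n+1}\sum_{x=-n}^0\Exp[\eta_t^\alpha(x)]\geq\rho_c$. Upgrading convergence in distribution (a statement about bounded test functions) to convergence of the unbounded observable $\eta(x)$ requires a uniform-integrability input; I would obtain it from attractiveness, dominating $\eta_t^\alpha$ from above by the stationary evolution of a configuration with comparable marginal tails, so that $\{\eta_t^\alpha(x)\}_t$ is uniformly integrable.

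\textbf{Step 2 (a priori upper bound from the bottleneck).} The heart of the matter is to bound the left density of $\eta_t^\alpha$ from above, for all $t$, by a quantity governed by the initial data. Because $p>1/2$, the net drift is rightward, and across a slow bond the sustainable leftward flux is strictly dominated by the rightward one; consequently particles cannot accumulate to the left of a slow site beyond what is supplied from further left. I would make this quantitative through a mass balance on a region $[x_m,0]$ whose left endpoint is a slow site from \eqref{assumption_afgl}: writing the occupation of $[x_m,0]$ as its initial value plus the net influx across the bond at $x_m-1$ minus the net outflux across $0$, and bounding the influx using $g\leq g_\infty=1$ and $\alpha(x_m)\to c$, the admissible long-time mean density over $[x_m,0]$ is controlled by the initial Cesàro average of $\eta_0$ over an enlarged left region, up to corrections governed by the critical current $(2p-1)c$. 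Crucially, the net rightward drift across $x_m$ shows that mass arriving from the right of the origin cannot sustainably feed the region to the left of $x_m$, so this contribution may be discarded.

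\textbf{Step 3 (alignment and conclusion).} By \eqref{eq_non16} there is a subsequence $n_k\to\infty$ along which $\frac1{n_k}\sum_{x=-n_k}^0\eta_0(x)\to\rho$. The condition \eqref{assumption_afgl}, in particular $x_{m+1}/x_m\to1$, guarantees for each $k$ a slow site $x_{m_k}$ with $|x_{m_k}|\sim n_k$ and $\alpha(x_{m_k})\to c$, and the same regularity ensures that replacing $n_k$ by $|x_{m_k}|$ leaves the limiting average unchanged, so $\frac1{|x_{m_k}|}\sum_{x=x_{m_k}}^0\eta_0(x)\to\rho$. Feeding this into Step 2 produces a subsequence of scales along which the left density of $\eta_t^\alpha$ stays below $\rho_c$ for large $t$, contradicting Step 1 and proving the theorem. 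The main obstacle is Step 2: turning the heuristic ``rightward drift plus a slow bottleneck prevents over-accumulation'' into a rigorous all-time upper bound, by controlling the fluctuations of the microscopic currents around their stationary values without invoking the full hydrodynamic results of \cite{bmrs2}. This is where attractiveness and the comparison with the subcritical equilibria $\mu_\lambda^\alpha$ (with $\overline{R}(\lambda)<\rho_c$) must be exploited.
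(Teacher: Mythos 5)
Your proposal diverges from the paper's proof in a way that leaves a genuine gap. The paper never attempts to control the spatial density of $\eta_t^\alpha$ on $[-n,0]$; instead it controls the \emph{time-averaged current across the origin}, $t^{-1}\Gamma_0(t,\eta_0)$. This choice matters twice. First, the instantaneous current is $p\alpha(0)g(\eta_s(0))-q\alpha(1)g(\eta_s(1))$, a \emph{bounded} local observable (since $g_\infty=1$), so convergence in distribution to $\mu_c^\alpha$ immediately forces $t^{-1}\Exp\Gamma_0(t,\eta_0)\to(p-q)c$ with no uniform-integrability issue. Your Step 1, by contrast, needs convergence of the unbounded observable $\eta_t^\alpha(x)$, and the domination you invoke does not exist in general: $\eta_0$ is an arbitrary deterministic element of $\N^\Z$ subject only to \eqref{eq_non16}, so there is no subcritical stationary configuration dominating it, and attractiveness only gives upper (not integrable) comparisons. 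Second, the paper's upper bound on the current is obtained by coupling: Corollary \ref{corollary_consequence} gives $\Gamma_0(t,\eta_0)\leq\Gamma_0(t,\eta^{*,y})+\sum_{x=y+1}^0\eta_0(x)$ with $y=\lfloor -tz\rfloor$, and Proposition \ref{th_strong_loc_eq} identifies $t^{-1}\Exp\Gamma_0(t,\eta^{*,y})\to f^*(z)$; optimizing over $z$ yields $\hat f(\rho)<(p-q)c$. This is precisely the quantitative input your Step 2 lacks.

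Beyond the missing lemma, your Steps 1 and 2 cannot be aligned as written because of an order-of-limits mismatch. Your mass balance on $[x_m,0]$ only beats the current contributions (of order $t$) when the window has length $|x_m|\sim zt$ with $z$ large; but convergence in distribution is a local statement, giving the lower bound $\rho_c$ on the Ces\`aro density only for a \emph{fixed} window as $t\to\infty$, then window $\to\infty$. Indeed, by the finite propagation property (Lemma \ref{lemma_finite_prop}), on windows of length $zt$ with $z$ large the empirical density is still essentially $\rho$ at time $t$ \emph{whether or not} convergence holds, so no contradiction can be extracted at that scale; and at fixed scale the boundary currents are not negligible. The paper's current-based argument is exactly the device that reconciles the macroscopic initial data with a local consequence of convergence, and you would need to import it (Corollary \ref{corollary_consequence} together with the source hydrodynamics \eqref{hdl_source}) to complete your Step 2 --- which you explicitly identify as the unproven ``main obstacle.''
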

We shall prove Theorem \ref{prop_nec}
in Section \ref{sec_nec}. \\ \\
Before turning eventually  to the question of convergence \eqref{eq:convergence_critical} 
under supercriticality condition \eqref{cond_init}, we show that this property cannot hold 
in general beyond the nearest-neighbour case \eqref{nn_hyp}. Indeed, the following theorem, 
proved in Section \ref{sec:proof_counter}, provides a family of counterexamples for jump 
kernels $p(.)$ not satisfying the nearest-neighbour assumption.
\begin{theorem}\label{prop_counter}
Assume 
\eqref{not_too_sparse} and  \eqref{average_afgl}.
Assume further that the jump kernel $p(.)$ is totally asymmetric and $p(1)<1$. 
Then there exists $\eta_0\in\N^\Z$ satisfying \eqref{cond_init}, such that 
$\eta^\alpha_t$ does not converge in distribution to $\mu_c^\alpha$ as $t\to +\infty$.
\end{theorem}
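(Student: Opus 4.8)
The plan is to exploit the fact that, because the kernel is totally asymmetric with $p(1)<1$, particles make jumps of length at least two and can therefore \emph{leapfrog} individual sites. In the nearest-neighbour case every particle must traverse each site in turn, so the slow sites forced by \eqref{not_too_sparse} act as genuine bottlenecks that throttle the current down to $c$ and pin the local density to the critical value; this is precisely the mechanism behind the convergence to $\mu_c^\alpha$. When $p(1)<1$ the mean drift $m:=\sum_{y\geq 1}y\,p(y)$ is strictly larger than $1$, the bypass jumps defeat this pinning, and one can feed the system a supercritical current from the left while keeping some local observable permanently away from its value under $\mu_c^\alpha$. The goal is thus to build $\eta_0\in\N^\Z$ satisfying \eqref{cond_init} together with a bounded local nondecreasing $h$ for which $\Exp h(\eta_t^\alpha)$ stays bounded away from $\int_{\bf X}h\,d\mu_c^\alpha$.

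The cleanest instance, which I would treat first, is a kernel supported on a proper sublattice, e.g.\ $p(2)=1$. Then jumps preserve parity and the dynamics decouples \emph{exactly} into two non-interacting processes, on the even and on the odd sites, while $\mu_c^\alpha$ factorizes accordingly and assigns positive (finite, since $\alpha>c$) mean occupation to every site. I would put all the mass on the even sublattice, heavily enough that the left density of $\eta_0$ satisfies \eqref{cond_init} (choosing $\eta_0(x)\to+\infty$ along the even negative sites if $\rho_c=+\infty$), and leave the odd sublattice empty. The odd process is then an autonomous zero-range process started with no particles and receiving none, so it stays identically empty for all $t$; its marginals remain $\delta_0$ and cannot converge to the positive-mean marginals $\theta_{c/\alpha(x)}$ of $\mu_c^\alpha$ on odd sites. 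This already yields non-convergence and explains the word ``family''.

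For a general totally asymmetric $p$ with $p(1)<1$ I would keep the same philosophy but replace exact decoupling by a quantitative bypass estimate. First I would record that an isolated slow site can sustain a current strictly exceeding the critical current $cm$: summing its direct outflow (at most $\alpha(x)g_\infty=\alpha(x)$) and the jumps that cross over it yields a maximal throughput exceeding $cm$ whenever $c<1$, so such sites are \emph{not} current-limiting at the critical level. I would then choose $\eta_0$ supercritical on the far left but ``organized'' relative to the slow sites provided by \eqref{not_too_sparse} (for instance supported along an arithmetic progression adapted to the support of $p$, or with a built-in corridor) so that the leapfrogging drains the excess mass to $+\infty$ faster than the critical profile can be replenished, forcing a persistent local deficit. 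Attractiveness of $(\eta_t^\alpha)_{t\geq 0}$ together with monotone couplings to auxiliary, exactly solvable modifications (a controlled-input queue, or the process with the left reservoir truncated) would be used to make ``bounded away from critical'' precise and uniform in $t$.

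The main obstacle is exactly the passage from the decoupled prototype to \emph{arbitrary} admissible environments and to general $p$ with $p(1)\in(0,1)$. When $p(1)>0$ the length-one jumps mix the residue classes and destroy the exact decoupling, so the starved-sublattice argument must be replaced by a genuinely dynamical estimate; and when the given $\alpha$ has runs of slow sites longer than the range of $p$, these runs can again become true bottlenecks, so the bypass estimate must be combined with a careful placement of $\eta_0$. Controlling the influence of the infinite supercritical reservoir on the local occupations — that is, showing that supercriticality to the left does \emph{not} re-pin the local density to the critical value once the bottleneck is removed — is where the real work lies, and is the step I would expect to consume most of the effort.
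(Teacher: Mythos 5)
Your sublattice prototype ($p(2)=1$: load the even sites, leave the odd sites empty forever) is a correct and genuinely different non-convergence argument for that special case, but it does not prove the theorem as stated, and you have correctly identified where your plan breaks down: the general case $p(1)\in(0,1)$ is left as an acknowledged ``main obstacle'' with only a heuristic (``bypass estimate'', ``organized'' initial data, ``persistent local deficit'') in place of a proof. That is a genuine gap, and moreover the heuristic points in a direction that does not match the actual mechanism. Your bypass computation shows that a single slow site is \emph{not} a bottleneck at the critical level when $m:=\sum_{z\geq 1}zp(z)>1$; by itself this gives no quantitative, $t$-uniform separation of any local observable from its value under $\mu_c^\alpha$, and the subsequent claim that leapfrogging ``drains the excess mass faster than the critical profile can be replenished'' is exactly the statement that would need proof.

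The paper's argument exploits $m>1$ in the opposite way, through the particle current across the origin, and works uniformly in $p$ and in the environment. Using \eqref{not_too_sparse} one picks slow sites $x_n\to-\infty$ with $\alpha(x_n)\to c$ and gaps $\delta_n=x_{n+1}-x_n$ satisfying $x_n/\delta_n\to 0$, and concentrates all the mass of $\eta_0$ in spikes of size $\approx\rho_c\delta_n$ at the $x_n$; the Ces\`aro averages then still meet \eqref{cond_init}. By total asymmetry each particle crosses $0$ at most once, so on the time scale $t_n\sim\delta_n$ the current $\Gamma_0(t_n,\eta_0)$ is bounded by the number of Poisson events at $x_n$ (rate $\alpha(x_n)\to c$, one particle emitted per event regardless of jump length) plus the $o(t_n)$ particles initially in $(x_n,0]$, giving $\limsup_n t_n^{-1}\Exp\,\Gamma_0(t_n,\eta_0)\leq c$. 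But under $\mu_c^\alpha$ the instantaneous rate of particles crossing $0$ is $\sum_{x\leq 0}\sum_{z>-x}p(z)\alpha(x)\Exp g(\eta(x))=c\,m>c$, so convergence to $\mu_c^\alpha$ is impossible. If you want to salvage your approach, this is the estimate to aim for: the spikes starve the current below $cm$ because a site can emit only one particle per clock ring no matter how far that particle jumps, which is precisely where $p(1)<1$ (hence $m>1$) enters.
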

For the purpose of convergence, we need to introduce the following weak convexity assumption:\\ \\ 
(H) For every $\lambda\in[0,c)$, $\overline{R}(\lambda)-\overline{R}(c)-(\lambda-c)\overline{R}^{'+}(c)>0$\\ \\
where $\overline{R}(c)$ is defined by \eqref{extend_R}, and
\be\label{def_upperdiff}
\overline{R}^{'+}(c):=\limsup_{\lambda\to c}\frac{\overline{R}(c)-\overline{R}(\lambda)}{c-\lambda}
\ee
is the left-hand derivative at $c$ of the convex envelope of $\overline{R}$
(notice that our assumptions do not imply existence of the derivative $\overline{R}'(c)$).\\ \\
For instance, if $R$ is strictly convex, then for any environment 
satisfying \eqref{average_afgl}--\eqref{assumption_afgl}, 
$\overline{R}$ is strictly convex (see Lemma \ref{lemma_properties_flux} below), 
and thus (H) satisfied. A sufficient condition for $R$ to be strictly convex 
(see \cite[Proposition 3.1]{bs}) is that 
$$ 
n\mapsto g(n+1)-g(n)\mbox{ is a nonincreasing function}
$$
The following result is established in \cite{bmrs2}.
\begin{theorem}
\label{convergence_maximal}
Assume  \eqref{average_afgl}, \eqref{assumption_pt}, \eqref{nn_hyp}, 
\eqref{assumption_afgl} and (H). Then \eqref{cond_init} implies \eqref{eq:convergence_critical}.
\end{theorem}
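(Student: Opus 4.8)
The plan is to match the upper bound of Theorem~\ref{proposition_afgl} with a lower bound of the same type, and to deduce \eqref{eq:convergence_critical} from the two. The space ${\bf X}=\overline{\N}^\Z$ being compact, the laws of $\eta_t^\alpha$ are automatically tight, so it suffices to identify all subsequential limits. Since the indicators of up-sets $\{\eta(x_1)\geq k_1,\dots,\eta(x_m)\geq k_m\}$ are bounded local nondecreasing functions and determine the finite-dimensional distributions, the class of bounded local nondecreasing $h:{\bf X}\to\R$ is distribution-determining; hence it is enough to establish, for every such $h$,
\begin{equation}\label{eq:lb_goal}
\liminf_{t\to\infty}\Exp h(\eta_t^\alpha)\geq\int_{\bf X}h\,d\mu_c^\alpha .
\end{equation}
Together with \eqref{upper_bound} this gives $\Exp h(\eta_t^\alpha)\to\int h\,d\mu_c^\alpha$ for all such $h$, forcing every subsequential limit to equal $\mu_c^\alpha$, i.e. \eqref{eq:convergence_critical}. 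Throughout I would exploit attractiveness: if $\zeta_0\leq\eta_0$ coordinatewise then the basic coupling yields $\zeta_t\leq\eta_t$, so $\Exp h(\eta_t^\alpha)\geq\Exp h(\zeta_t^\alpha)$, and it suffices to produce, below $\eta_0$, initial data whose evolution is tractable.

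I would feed \eqref{eq:lb_goal} from the hydrodynamic behaviour established in \cite{bmrs2}. Under \eqref{nn_hyp} the quenched process has a hydrodynamic limit governed by the scalar conservation law $\partial_t u+\partial_x G(u)=0$, whose flux comes from the equilibrium current: since $\Exp_{\mu_\lambda^\alpha}[g(\eta(x))]=\lambda/\alpha(x)$, the stationary net current across any bond equals $(p-q)\lambda$, so that, writing $\rho=\overline{R}(\lambda)$, one has $G(\rho)=(p-q)\overline{R}^{-1}(\rho)$ for $\rho\in[0,\rho_c]$ and $G(\rho)=(p-q)c$ for $\rho\geq\rho_c$. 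By Lemma~\ref{lemma_properties_flux} and assumption (H), $G$ is concave, strictly increasing up to $\rho_c$, and then constant at its maximal value $(p-q)c$. The supercriticality hypothesis \eqref{cond_init} says that any macroscopic rescaling of $\eta_0$ carries, to the left of the origin, at least the critical mass $\rho_c$; by attractiveness I would dominate $\eta_0$ from below by rescaled deterministic data whose macroscopic profile is supercritical on $(-\infty,0)$ and null on $(0,+\infty)$, and to which the hydrodynamic limit of \cite{bmrs2} applies.

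The heart of the lower bound is then a PDE statement: for the entropy solution $u(t,\cdot)$ issued from such supercritical data, the density at the origin satisfies $\liminf_{t\to\infty}u(t,0)\geq\rho_c$. This is the maximal-current principle for a concave flux that is flat beyond $\rho_c$: the left reservoir drives the bond at the origin at the maximal current $(p-q)c$, and the Lax--Oleinik variational formula for the primitive $U(t,x)=\int_0^x u(t,y)\,dy$ converts the averaged mass furnished by \eqref{cond_init} into a density at least $\rho_c$ near the origin. Assumption (H) enters precisely here: the strict inequality $\overline{R}(\lambda)>\overline{R}(c)+(\lambda-c)\overline{R}^{'+}(c)$ for $\lambda<c$ is strict concavity of $G$ at $\rho_c$ from below, which rules out a standing shock, or a subcritical interface sticking at the origin, and guarantees that the critical value is actually attained. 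Once the macroscopic density at the origin is shown to be at least $\rho_c$, a local-equilibrium argument (again from \cite{bmrs2}), combined with the fact that every density $\geq\rho_c$ corresponds to the capped fugacity $c$, upgrades this into $\liminf_{t\to\infty}\Exp h(\zeta_t^\alpha)\geq\int h\,d\mu_c^\alpha$, whence \eqref{eq:lb_goal}.

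The main obstacle is the degeneracy at criticality and its interplay with the two scales. Macroscopically one controls only densities, and above $\rho_c$ the density no longer determines the fugacity; it is the combination of the lower bound (density $\geq\rho_c$, hence fugacity capped at $c$) with the upper bound \eqref{upper_bound} (which forbids a fugacity exceeding $c$ and discards any excess condensate) that pins the local law to $\mu_c^\alpha$ rather than to some measure of strictly larger density. The delicate technical point is to pass from the rescaled hydrodynamic statement back to the unrescaled distributional convergence \eqref{eq:convergence_critical}: one must let the macroscopic time grow while extracting, via local equilibrium and the strict concavity supplied by (H), genuine microscopic information in a fixed window around the origin, and to do so under the weak environment hypotheses \eqref{average_afgl} and \eqref{assumption_afgl} rather than the stronger summability \eqref{cond_fs}.
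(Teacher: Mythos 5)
Your overall frame --- compactness gives tightness, bounded local nondecreasing functions are distribution-determining, and Theorem \ref{convergence_maximal} reduces to a lower bound matching \eqref{upper_bound} for every such function --- coincides with the paper's (this is exactly Proposition \ref{prop_main_work}), and your reading of the role of (H) (it forces the density observed behind the maximal-current front to be exactly $\rho_c$, i.e.\ $\lambda_0=c$ in the paper's notation) is correct in substance. The genuine gap is in the step converting supercriticality into something the hydrodynamic limit can act on: you propose to dominate $\eta_0$ from below, via attractiveness, by rescaled deterministic data whose macroscopic profile is supercritical on $(-\infty,0)$. But \eqref{cond_init} is only a liminf of Ces\`aro averages and gives no pointwise lower bound on $\eta_0$. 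Take for instance $\eta_0(x)=0$ except at $x=-k^2$, $k\geq 1$, with $\eta_0(-k^2)\approx 2\rho_c k$: this satisfies \eqref{cond_init}, yet every configuration $\zeta_0\leq\eta_0$ is supported on a zero-density set of sites and so has macroscopic profile $0$. No nonzero-density initial profile can be slipped under such an $\eta_0$, and attractiveness applied at time $0$ yields nothing.

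The paper's route around this is dynamic rather than static: one does not compare initial data, but proves (Lemma \ref{lemma_asymptotic_order}, whose proof is the main technical work deferred to \cite{bmrs2}) that for $\beta<-v_0$ the \emph{evolved} process $\eta_t^\alpha$ dominates, with probability tending to $1$, the evolved source process $\eta_t^{\alpha,t,\beta}$ on $\{x\geq A_\varepsilon(\alpha)\}$; this rests on current comparisons in the spirit of Lemma \ref{lemma_current} and Corollary \ref{corollary_consequence}, which are designed precisely to exploit a Ces\`aro-average condition such as \eqref{cond_init} without pointwise information. The second half of your sketch (hydrodynamics and local equilibrium for the source, with (H) pinning the fugacity at $c$) corresponds to Proposition \ref{th_strong_loc_eq}, Lemma \ref{lemma_entropy}(ii) and Lemma \ref{lemma_asymptotic_source} and is sound in outline; note, however, that $f$ is not asserted to be concave (Lemma \ref{lemma_properties_flux} gives only monotonicity and smoothness), which is why the paper works with the Legendre transform $f^*$ and the concave envelope $\hat f=f^{**}$ rather than with a Lax--Oleinik formula for a genuinely concave flux.
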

Given Theorem \ref{proposition_afgl}, Theorem \ref{convergence_maximal} 
requires the proof of the following lower bound:
\begin{proposition}\label{prop_main_work}
Assume  \eqref{average_afgl}, \eqref{assumption_pt}, \eqref{nn_hyp}, \eqref{assumption_afgl} and (H). 
Then the following holds: 
for any $\eta_0\in{\N^\Z}$  satisfying \eqref{cond_init}, and every bounded local nondecreasing 
function $h:{\bf X}\to\R$,
$$
\liminf_{t\to\infty}\Exp h(\eta_t^\alpha)\geq\int_{\bf X} h(\eta)d\mu_c^\alpha(\eta)
$$
\end{proposition}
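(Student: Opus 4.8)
The plan is to establish the lower bound by comparing the quenched process $(\eta_t^\alpha)$ from below with a carefully chosen auxiliary process whose long-time behaviour is controlled by the hydrodynamic limit results of \cite{bmrs2}. Since $h$ is nondecreasing and the process is attractive, it suffices to produce, for each fixed $T$ large, an initial configuration $\widetilde\eta_0\leq\eta_0$ for which the evolved configuration $\widetilde\eta_t^\alpha$ at a suitable macroscopic time has a local law close to $\mu_c^\alpha$ near the origin, and then to let $T\to+\infty$. Concretely, I would first use the supercriticality condition \eqref{cond_init}, $\liminf_n n^{-1}\sum_{x=-n}^{0}\eta_0(x)\geq\rho_c$, to extract a macroscopic initial profile $u_0(\cdot)$ with $u_0(x)\geq\rho_c$ for all $x\le 0$ (after passing to a subsequence of rescalings and using a lower-bounding step profile supported on $(-\infty,0]$); monotonicity of the process then lets me replace $\eta_0$ by this deterministic lower profile without loss.

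Next I would invoke the hydrodynamic limit of \cite{bmrs2}: under \eqref{average_afgl}, \eqref{assumption_pt}, \eqref{nn_hyp}, \eqref{assumption_afgl} and (H), the empirical density of $(\eta_t^\alpha)$ converges to the entropy solution of a scalar conservation law $\partial_t u + \partial_x[f(u)] = 0$, where the flux $f$ is determined by the annealed density $\overline{R}$ and the drift $p-q$. The crucial structural input is the supercritical plateau: because $\overline{R}$ saturates at $\rho_c=\overline{R}(c)$ and the convexity assumption (H) guarantees that the convex envelope has left derivative $\overline{R}^{'+}(c)$ at $c$, the value $\rho_c$ is a stable (``critical'') density for which the characteristic speed is nonpositive in the relevant regime. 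Starting from a profile bounded below by $\rho_c$ on the left half-line, the entropy solution near the origin relaxes to the constant $\rho_c$ for all sufficiently large macroscopic times: the supercritical mass to the left cannot drain away past the origin faster than it is replenished, precisely because the flux is maximized and flat at the critical value. This is exactly the mechanism that \eqref{assumption_afgl} (slow sites not too sparse) secures, preventing the critical bottleneck from being bypassed.

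From the hydrodynamic statement I would then pass from macroscopic density information to the microscopic local equilibrium $\mu_c^\alpha$. The standard route is via a local equilibrium / one-block estimate adapted to the disordered setting: once the empirical density in a mesoscopic box around the origin is shown to be $\rho_c$ at macroscopic time $T$, the conditional law of the configuration in that box is asymptotically the product measure with the prescribed critical density, namely $\mu_c^\alpha$ restricted locally. Combining this with the attractiveness bound and the arbitrariness of the (subsequential) macroscopic time gives $\liminf_{t\to\infty}\Exp\, h(\eta_t^\alpha)\geq\int h\,d\mu_c^\alpha$ for the lower-bounding process, and hence for $(\eta_t^\alpha)$ itself by monotonicity.

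The main obstacle I expect is the second step: transferring the macroscopic plateau at the critical density into convergence to the critical \emph{invariant} measure, rather than to some measure with the right density but subcritical or ill-defined parameter. At $\lambda=c$ the family $\theta_{\lambda/\alpha(x)}$ sits exactly at the edge of its domain of definition (the parameter $c/\alpha(x)$ can equal $1$ along the slow sites of \eqref{assumption_afgl}), so the one-block estimate must be carried out uniformly in the disorder and up to the boundary of the parameter range, where $R(\cdot)$ may have unbounded derivative and the measures lose exponential tails. Handling this degeneracy — ensuring the local empirical measure genuinely selects $\mu_c^\alpha$ and not a measure of the same density but with excess fluctuations — is where the weak convexity assumption (H) and the finiteness \eqref{assumption_pt} must be used delicately, and it is the crux that \cite{bmrs2} is designed to resolve.
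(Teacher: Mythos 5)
Your overall strategy (bound $\eta_t^\alpha$ from below by an auxiliary process, use attractiveness and monotonicity of $h$, and control the auxiliary process via the hydrodynamic results of \cite{bmrs2}) is the right one, but two of your concrete steps do not work as stated. First, the reduction to an initial profile $u_0(x)\geq\rho_c$ on $(-\infty,0]$ is not available: condition \eqref{cond_init} is only a Ces\`aro statement about the averages $n^{-1}\sum_{x=-n}^{0}\eta_0(x)$, and it is compatible with configurations whose mass sits in arbitrarily sparse spikes (compare the construction in the proof of Theorem \ref{prop_counter}). For such $\eta_0$ there is no configuration $\widetilde\eta_0\leq\eta_0$ with everywhere positive, let alone supercritical, density, so you cannot ``replace $\eta_0$ by a deterministic lower profile'' by attractiveness. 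The paper's comparison object is instead the source process \eqref{def_init_source} started from $(+\infty)\indicator{\{x\leq\lfloor\beta t\rfloor\}}$ with $\beta<-v_0$, which is \emph{not} below $\eta_0$ initially; the domination of Lemma \ref{lemma_asymptotic_order} is an assertion about the configurations at time $t$, holding with probability tending to $1$ and proved by current-comparison arguments (in the spirit of Lemma \ref{lemma_current}), not by ordering of initial data. This substitution of a time-$t$ stochastic domination for an initial-data domination is the missing idea.

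Second, your passage from ``empirical density $\rho_c$ near the origin'' to ``local law close to $\mu_c^\alpha$'' via a ``standard one-block estimate'' is exactly the step the paper flags as unavailable: strong local equilibrium in the disordered setting cannot be obtained by the translation-invariance arguments of \cite{lan}, and the resolution in \cite{bmrs2} is a strong local equilibrium statement \eqref{the_first_one} for the \emph{source} process along rays of speed $v$, with an equilibrium measure $\mu_{\lambda^-(v)}^{\tau_{\lfloor x_t+vt\rfloor}\alpha}$ that moves with the disorder. The identification of the limit as $\mu_c^\alpha$ then comes from Lemma \ref{lemma_entropy}: $\lambda^-(v)\to\lambda_0$ as $v\downarrow v_0$, and (H) is precisely equivalent to $\lambda_0=c$. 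Your account of (H) is also off: the characteristic speed at $\rho_c$ is $v_0=(p-q)\overline{R}^{'+}(c)^{-1}\geq 0$, not nonpositive; the role of (H) is not to pin the plateau at the origin but to prevent a density gap $\overline{R}(\lambda_0)<\rho_c$ immediately behind the front issued by the source, which would make the observed local parameter strictly subcritical for every admissible $\beta$.
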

The ideas of the proof of Proposition \ref{prop_main_work}, and in particular 
the role of assumption (H), are explained in Section \ref{sec_ideas}.
\section{Preliminaries}\label{sec_prel}
In this section, we introduce some material that will be used in the sequel.
\subsection{Harris construction}
We define the graphical construction of the quenched process on a probability space $(\Omega,\mathcal F,\Prob)$.
A generic element $\omega$ - called a Harris system (\cite{har}) - of $\Omega$  is a  point measure of the form
\be\label{def_omega}
\omega(dt,dx,du,dz)=\sum_{n\in\N}\delta_{(T_n,X_n,U_n,Z_n)}
\ee
 on $(0,+\infty)\times\Z\times(0,1)\times\Z$, where $\delta$ denotes Dirac measure.
Under the probability measure $\Prob$, $\omega$ is a Poisson measure with intensity 
$dtdx\indicator{[0,1]}(u)du\, p(z)dz$. 
 An alternative interpretation of this random point measure is that we have three mutually 
 independent families of independent 
random variables $(D^x_k)_{x\in\Z,\,k\in\N}$, $(U^x_k)_{x\in\Z,\,k\in\N}$ and
$(Z^x_k)_{x\in\Z,\,k\in\N}$,  such that $D^x_k$ has exponential distribution with 
parameter $1$, $U_k^x$ has uniform distribution on $(0,1)$,
$Z_x^k$ has distribution  $p(.)$, and that if we set
\be\label{poisson_one}
T^x_k:=\sum_{j=0}^k D^x_j,
\ee
then, $\Prob$-a.s.,
\be\label{notation_harris}
\omega(dt,dx,du,dz)=\sum_{x\in\Z}\sum_{k\in\N}\delta_{(T_k^x,x,U_k^x,Z^x_k)}
\ee
On  $(\Omega,\mathcal F,\Prob)$,  a c\`adl\`ag process $(\eta_t^\alpha)_{t\geq 0}$ 
with generator \eqref{generator} and initial configuration $\eta_0$ can be constructed
in a unique way so that at each time $t=T^x_k$, if 
$U^x_k\leq \alpha(x)g\left[\eta_{t-}(x)\right]$ (which implies $\eta_{t-}(x)>0$, cf. \eqref{properties_g}), 
then one of the particles at $x$ jumps to $x+Z^x_k$, whereas nothing occurs outside times $T^x_k$.\par
For details on this graphical construction, we refer to \cite{bgrs4}. 
When  necessary, random initial conditions are constructed on an auxiliary probability space $\Omega_0$ 
equipped with a probability measure $\Prob_0$. \\ \\
 Expectation with respect to $\Prob$ (resp. $\Prob_0$) is denoted by $\Exp$ (resp. $\Exp_0$). 
The product space $\Omega_0\times\Omega$ is equipped with the product measure and 
$\sigma$-fields (thus  environment, initial particle configuration and Harris system are mutually  independent). 
Joint expectation with respect to the product 
measure  is denoted by $\Exp_0\Exp$.\\ \\
In the sequel, we shall have to couple different processes with different 
(possibly random) initial configurations, and possibly different environments.
Such couplings will be realized on $\Omega_0\times\Omega$ by using the same Poisson clocks for all processes.
\subsection{Finite propagation}
The following version of {\em finite propagation property} will be used repeatedly in the sequel. 
\begin{lemma}\label{lemma_finite_prop}
For each $W > 1$, there exists $b =  b(W) >  0$ such that for large enough $t$, if 
$ \zeta_0 $ and $\zeta' _ 0 $ are any two particle configurations that agree on an 
interval $(x,y)$, then, outside probability $e^{-bt}$,
$$
\zeta_s(u) = \zeta'_s(u) \quad\mbox{for all }0 \leq s \leq t \mbox{ and } u\in (x+Wt,y-Wt)
$$ 
where $(\zeta_t)_{t\geq 0}$ and $(\zeta'_t)_{t\geq 0}$ denote processes with generator
 \eqref{generator} coupled through the Harris construction,
with respective initial configurations $\zeta_0$ and $\zeta'_0$.
\end{lemma}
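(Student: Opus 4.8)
The plan is to prove the finite propagation property (Lemma \ref{lemma_finite_prop}) by exploiting the attractiveness of the process together with a large-deviation estimate for the propagation of discrepancies through the Harris graphical construction. The key observation is that the jump kernel is nearest-neighbour (recall \eqref{nn_hyp}), so a discrepancy between the two coupled configurations $\zeta_s$ and $\zeta'_s$ at a site $u$ can only arise if it has been transported there, one step at a time, from a site outside the initial agreement interval $(x,y)$. Since $\zeta_0$ and $\zeta'_0$ agree on $(x,y)$, the leftmost point of potential disagreement starts at (or to the left of) $x$, and the rightmost at (or to the right of) $y$; these two ``fronts'' can only encroach into the interval by successive nearest-neighbour moves dictated by the common Poisson clocks. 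The goal is to show that, with overwhelming probability, neither front advances faster than linear speed $W$.

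First I would make precise the coupling: both processes use the same Harris system $\omega$, so at each potential jump time $T^x_k$ the same site $x$, the same uniform mark $U^x_k$, and the same displacement $Z^x_k\in\{-1,+1\}$ are used. By attractiveness and the basic coupling, the set of sites where $\zeta_s$ and $\zeta'_s$ differ evolves in a controlled way; in particular a new site $u$ can join the disagreement set only through a clock ring at $u$ itself or at one of its neighbours $u\pm 1$ (the source of the transported discrepancy). I would define the right front $R_s:=\sup\{u\le y:\ \zeta_s\text{ and }\zeta'_s\text{ may differ on }[u,y)\text{ due to transport from the right boundary}\}$ and symmetrically a left front $L_s$, and argue that $R_s$ increases by at most one at each relevant clock ring and only when an actual leftward particle transfer occurs across the front. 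The number of such rings up to time $t$ affecting the front is controlled by a Poisson process, so the front displacement is dominated by a sum of independent increments.

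Next I would carry out the large-deviation step. The displacement of each front is stochastically dominated by (a constant multiple of) a Poisson random variable with parameter proportional to $t$, since each site's clock rings at rate at most $1$ and only rightward/leftward transfers matter. A standard exponential Chebyshev bound gives that the probability the front advances by more than $Wt$ decays like $e^{-bt}$ for some $b=b(W)>0$, provided $W>1$ exceeds the mean speed (which is bounded by the clock rate). Taking a union bound over the two fronts, and using that the front process is monotone in $s$ so that controlling the displacement at time $t$ controls it for all $s\le t$, yields that outside probability $e^{-bt}$ one has $L_s\le x+Wt$ and $R_s\ge y-Wt$ for all $0\le s\le t$; consequently $\zeta_s(u)=\zeta'_s(u)$ for every $u\in(x+Wt,y-Wt)$, which is the claim.

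The main obstacle I anticipate is making the notion of ``front'' rigorous and monotone despite the fact that, at a site occupied by infinitely many particles, the coupling bookkeeping is delicate (recall the conventions for $\eta^{x,y}$ when $\eta(x)=+\infty$ in \eqref{generator}). The clean way around this is to avoid tracking the discrepancy set directly and instead invoke a comparison: couple each of $\zeta$ and $\zeta'$ to an auxiliary process that is a simple random walk barrier moving with the fronts, and show the true discrepancy cannot outrun this barrier. One must verify that the same Poisson clocks drive both the barrier and the configurations so that the domination holds deterministically on the event controlling the clock rings; the exponential tail then follows from the barrier being a biased nearest-neighbour walk. Establishing this deterministic domination, rather than the large-deviation estimate itself, is where the real care is required.
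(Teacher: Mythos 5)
Your proposal is correct and follows essentially the same route as the paper: the paper's proof introduces exactly the auxiliary barrier processes you describe in your final paragraph, namely rate-one Poisson processes $B^{y,+}_\cdot$ and $B^{x,-}_\cdot$ that jump one step at every clock ring at their current position (irrespective of whether a particle actually moves), observes that the coupled configurations deterministically agree on $(B^{x,-}_s,B^{y,+}_s)$ for all $s\le t$, and concludes by Poisson large deviations since $W>1$ exceeds the rate-one speed. Your initial, more delicate bookkeeping of the actual discrepancy set is unnecessary, as you yourself note; the clock-driven barrier is the whole argument.
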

\mbox{}
\begin{proof}{Lemma}{lemma_finite_prop}
Given the Harris system and a point $x \in \Z$, we can define the process $(B^{x,+}_t )_{t \geq 0 }$ 
to be the (c\`adl\`ag) Poisson process 
beginning at value $x$ at time $0$ such that $B^{x,+}_.$ jumps to the right from $y $ to $y+1$ 
at time $s$  if and only if $B^{x,+}_{s-}=y$, and $ s = T^y_k$ for some positive integer $k$, 
with $(T^y_k)_{k\in\N}$ introduced 
in \eqref{poisson_one}.
We similarly define the decreasing Poisson process $\left(B^{x,-}_t\right)_{t\geq 0}$. 
It follows from the graphical construction that,  if we have two configurations 
$\eta_0 $ and $ \xi_0 $ which agree on interval $(x,y)$, then $\zeta_s $ and $\zeta'_s$ will agree 
on spatial interval   $(B^{x,-}_s, B^{y,+}_s)$ for all $0 \leq s \leq t$.  
 Given that the defined processes (or their negatives) are rate one 
Poisson processes we deduce the result.
\end{proof}
\subsection{Currents}
Let $x_.=(x_s)_{s\geq 0}$ denote a $\Z$-valued piecewise constant c\`adl\`ag   path such 
that $\vert x_s-x_{s-}\vert\leq 1$ for all $s\geq 0$. In the sequel we will only use 
deterministic paths $(x_.)$, 
hence we may assume that $x_.$ has no jump time in common with the Harris system used for the particle dynamics.
We denote by $\Gamma_{x_.}(t,\eta)$ the rightward current across the path $x_.$ 
up to time $t$ in the quenched process  $(\eta_s^\alpha)_{s\geq 0}$  starting from $\eta$ 
in environment $\alpha$, that is, 
 the number of times a particle jumps from $x_{s-}$ to $x_{s-}+1$ (for $s\le t$), 
 minus the number of times a particle jumps from $x_{s-}+1$ to $x_{s-}$, 
 minus  or plus (according to whether the jump is to the right or left)  
  the number of particles at $x_{s-}$ if $s$ is a jump time of $x_.$.
 If  $\sum_{x>x_0}\eta(x)<+\infty$,  we also have
 \be\label{current}
 \Gamma_{x_.}(t,\eta)=\sum_{x>x_t}\eta_t^\alpha(x)-\sum_{x>x_0}\eta(x)
 \ee
For $x_0\in\Z$, we will write $\Gamma_{x_0}$ to denote the current across the 
fixed site $x_0$; that is, $\Gamma_{x_0}(t,\eta):=\Gamma_{x_.}(t,\eta)$,
where $x_.$ is the constant path defined by $x_t=x_0$ for all $t\geq 0$.
\\ \\
The following results will be important tools to compare currents.
For a particle configuration $\zeta\in{\bf X}$ and a site $x_0\in \Z$, we define
\be\label{def_cfd}
F_{x_0}(x,\zeta):=\left\{
\ba{lll}
\sum_{y=1+x_0}^{x}\zeta(y) & \mbox{if} & x>x_0\\ \\
-\sum_{y=x}^{x_0}\zeta(y) & \mbox{if} & x\leq x_0
\ea
\right.
\ee
Let us couple two processes $(\zeta_t)_{t\geq 0}$ and $(\zeta'_t)_{t\geq 0}$ in the usual 
way through the Harris construction. 
\begin{lemma}
\label{lemma_current}
\be\label{current_comparison}\Gamma_{x_.}(t,\zeta_0)-\Gamma_{x_.}(t,\zeta'_0)\geq -\left(0\vee\sup_{y\in\Z}\left[F_{x_0}(y,\zeta_0)-F_{x_0}(y,\zeta'_0)\right]\right)\ee
\end{lemma}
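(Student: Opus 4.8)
The plan is to reduce the inequality to a statement about a single \emph{discrepancy height function} and to control that function using the attractiveness of the basic (Harris) coupling, for which we only use that $g$ is nondecreasing. First I would reduce to the case where $\sum_{x>x_0}\zeta_0(x)$ and $\sum_{x>x_0}\zeta'_0(x)$ are finite, where $x_0$ is the initial point of the path $x_.$, by truncating both initial configurations far to the right and letting the truncation recede; monotonicity in the initial data and \eqref{current} let one pass to the limit. In this regime \eqref{current} gives the clean identity
\[
\Gamma_{x_.}(t,\zeta_0)-\Gamma_{x_.}(t,\zeta'_0)=A_t-A_0,\qquad A_t:=\sum_{z>x_t}\bigl[\zeta_t(z)-\zeta'_t(z)\bigr].
\]
Here $A_0=\lim_{y\to+\infty}\bigl[F_{x_0}(y,\zeta_0)-F_{x_0}(y,\zeta'_0)\bigr]$, so $A_0\le s_0:=\sup_{y}\bigl[F_{x_0}(y,\zeta_0)-F_{x_0}(y,\zeta'_0)\bigr]$, which is exactly the quantity appearing on the right-hand side of \eqref{current_comparison}.

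The core is to track, relative to the moving path, the function
\[
\Phi_t(y):=F_{x_t}(y,\zeta_t)-F_{x_t}(y,\zeta'_t),\qquad s_t:=\sup_{y\in\Z}\Phi_t(y),
\]
and to show that $s_t$ can increase only when a $\zeta$-particle crosses the path edge $(x_t,x_t+1)$ rightward (with symmetric statements for the other crossing types), these crossings contributing precisely $\Gamma_{x_.}(t,\zeta_0)-\Gamma_{x_.}(t,\zeta'_0)$. The mechanism is the attractive \emph{argmax} argument. By \eqref{def_cfd}, a particle move across an edge $(a,a+1)\neq(x_t,x_t+1)$ changes $\Phi_t$ at a single site by $\pm1$, whereas a crossing of the path edge shifts $\Phi_t$ uniformly by $\pm1$. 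At a site $a$ where $\Phi_t$ attains $s_t$ one has $\Phi_t(a)\ge\Phi_t(a-1)$ and $\Phi_t(a)\ge\Phi_t(a+1)$, i.e. $\zeta_t(a)\ge\zeta'_t(a)$ and $\zeta_t(a+1)\le\zeta'_t(a+1)$; since $g$ is nondecreasing, the basic coupling then forbids exactly the unmatched moves that would raise $\Phi_t(a)$ (an unmatched rightward $\zeta'$-move would need $\zeta'_t(a)>\zeta_t(a)$, an unmatched leftward $\zeta$-move would need $\zeta_t(a+1)>\zeta'_t(a+1)$). Hence no internal move raises $s_t$, and one obtains the exact decomposition
\[
\Gamma_{x_.}(t,\zeta_0)-\Gamma_{x_.}(t,\zeta'_0)=(s_t-s_0)+I_t,\qquad I_t\ge0,
\]
where $I_t$ is the total nonnegative amount by which internal moves have lowered the supremum.

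It then remains to conclude. From the displayed decomposition, $I_t\ge0$, and $s_t\ge0$ (valid in the truncated regime, where $\Phi_t(y)\to0$ along a sequence $y\to-\infty$), we get
\[
\Gamma_{x_.}(t,\zeta_0)-\Gamma_{x_.}(t,\zeta'_0)\ge s_t-s_0\ge -s_0\ge-\bigl(0\vee s_0\bigr),
\]
which is \eqref{current_comparison}; removing the truncation by the monotone limit described above finishes the proof.

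I expect two points to require the most care. The first is the bookkeeping at \emph{path jumps}: a jump of $x_.$ shifts the reference in $F_{x_t}$ and reshuffles $\Phi_t$, with an off-by-one effect at the reference site coming from \eqref{def_cfd}; one must check that the correction terms built into the definition of $\Gamma_{x_.}$ are exactly those making both the identity for $A_t$ and the monotonicity of $s_t$ survive across such jumps. The second is the \emph{boundary/infinite-mass} issue: justifying \eqref{current} and the bound $s_t\ge0$ rests on the truncation argument, and it is the $0\vee$ in \eqref{current_comparison} that absorbs the case $s_0\le0$. A minor further point is that for a non-nearest-neighbour kernel $p(.)$ a single jump alters $\Phi_t$ on a whole interval rather than at one site, so the argmax step must be run at the endpoints of that interval; the monotonicity of $g$ still rules out the offending unmatched moves.
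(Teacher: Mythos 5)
Your strategy --- replacing the paper's particle labels by the discrepancy of height functions and controlling its supremum via the attractive argmax mechanism --- is the natural dual of what the paper actually does (the paper labels particles, shows that the ordering $\sigma_n(t)\geq\sigma'_{n-k}(t)$ with $k=0\vee s_0$ is preserved by the basic coupling, and reads the current off the labels). The argmax step itself is sound. The genuine gap is in the concluding step: you need $s_t\geq 0$ to pass from $\Gamma_{x_.}(t,\zeta_0)-\Gamma_{x_.}(t,\zeta'_0)\geq s_t-s_0$ to \eqref{current_comparison}, and this is false in general. Your justification (``$\Phi_t(y)\to 0$ along a sequence $y\to-\infty$ in the truncated regime'') is incorrect: by \eqref{def_cfd}, for $y\leq x_t$ one has $F_{x_t}(y,\zeta_t)\to-\sum_{z\leq x_t}\zeta_t(z)$, so $\Phi_t(y)$ tends to \emph{minus the difference of the particle counts at or to the left of $x_t$}, which has no definite sign. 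Concretely, take $q>0$, $x_.\equiv 0$, $\zeta_0=\delta_1$, $\zeta'_0=\delta_2$. Then $s_0=1$ and the lemma asserts $\Gamma_0(t,\zeta_0)-\Gamma_0(t,\zeta'_0)\geq-1$. The two particles occupy distinct sites, hence move independently under the Harris coupling, and can reach positions $0$ and $1$ respectively; at that time $\Phi_t\equiv-1$, so $s_t=-1$ and your chain only yields the strictly weaker bound $\geq s_t-s_0=-2$. So the proposal, as written, does not establish \eqref{current_comparison} (which is attained with equality in this example).

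The missing ingredient is exactly the information your $\Phi_t$ discards relative to the labels: a normalization \emph{at the path}. If instead of $F_{x_t}$ you use the genuine height function $H_{x_t}(y,\zeta):=\sum_{x_t<z\leq y}\zeta(z)$ for $y\geq x_t$ and $:=-\sum_{y<z\leq x_t}\zeta(z)$ for $y<x_t$, then $\Psi_t(y):=H_{x_t}(y,\zeta_t)-H_{x_t}(y,\zeta'_t)$ satisfies $\Psi_t(x_t)=0$, hence $\sup_y\Psi_t(y)\geq 0$ automatically, and a direct computation gives $\sup_y\Psi_0(y)=0\vee s_0$ (this is also where the $0\vee$ enters, rather than being ``absorbed'' at the end). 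Running your argmax argument on $\Psi$ instead of $\Phi$ --- the path-crossing and path-jump bookkeeping matches the corrections in the definition of $\Gamma_{x_.}$ exactly as in \eqref{current} --- yields $\sup_y\Psi_t(y)\leq \Gamma_{x_.}(t,\zeta_0)-\Gamma_{x_.}(t,\zeta'_0)+0\vee s_0$, and evaluating at $y=x_t$ gives the lemma. This corrected invariant is precisely equivalent to the paper's preserved label ordering, where the ``$\geq 0$'' is automatic because the comparison is applied to the label sitting immediately to the right of the path. With that repair your route is a legitimate alternative to the paper's proof; without it the argument proves a weaker statement.
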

\begin{proof}{Lemma}{lemma_current} 
Without loss of generality, we assume $x_0=0$. We shall simply write $F$ for $F_{0}$ and $\Gamma$ for $\Gamma_{x_.}$.\\ \\
We label particles of each system increasingly from left to right.
 We denote by $\sigma_i(t)\in\Z\cup\{+\infty,-\infty\}$, resp. $\sigma'_i(t)\in\Z\cup\{+\infty,-\infty\}$, 
the position at time $t$ of the $\zeta$-particle with label $i\in\Z$.
 resp. $\zeta'$-particle with label $i$. This labeling is unique if we impose the following conditions: 
\textit{(a)} $\sigma_i(0)\leq\sigma_{i+1}(0)$, resp. $\sigma'_i(0)\leq\sigma'_{i+1}(0)$, for every $i\in\Z$,  
\textit{(b)} $\sigma_{-1}(0)\leq x_0=0<\sigma_0(0)$ and $\sigma'_{-1}(0)\leq x_0=0<\sigma'_0(0)$, 
\textit{(c)}
 if $n_+:=\sum_{y>0}\zeta_0(y)<+\infty$, then $\sigma_i(0)=+\infty$ for $i\geq n_+$, 
 (d) if $n_-=\sum_{y\leq 0}\zeta(y)<+\infty$, then $\sigma_i(0)=-\infty$ for $i<-n_-$.
 In the sequel, for notational simplicity, we simply write $\sigma_i$ and $\sigma'_i$ 
 instead of $\sigma_i(0)$ and $\sigma'_i(0)$.\\ \\
 The motion of labels is deduced from the initial labeling and Harris construction as follows. 
 Whenever a particle in one of the processes jumps to the right (resp. left),
 the highest (resp. lowest) label occupying this site is moved to the right (resp. left), 
 so that at its new location it becomes the particle with lowest (resp. highest) label.
 Let $k=0\vee\sup_{y\in\Z}\left[F(y,\zeta)-F(y,\zeta')\right]$. \\ \\
 First, we show that the definition of  $F$ implies $\sigma_n\geq\sigma'_{n-k}$ for all $n\in\Z$.
 Indeed,
 assume first $n\geq 0$, and let $y=\sigma_n>0$. By definition of $F$, $F(y,\zeta)-1$ is 
 the highest label of a $\zeta$-particle at $y$,
 thus $F(y,\zeta)\geq n+1$. By definition of $k$, $F(y,\zeta')-1\geq F(y,\zeta)-k-1\geq n+1-k-1=n-k$, 
 and this is the highest label of a $\zeta'$-particle at $y$. Hence $\sigma'_{n-k}\leq y=\sigma_{n}$.
 Assume now $n<0$, and  let $y=\sigma'_n\leq 0$. By definition of $F$, $F(y,\zeta')$ is 
 the lowest label of a $\zeta'$-particle at $y$,
 thus $F(y,\zeta')\leq n$. By definition of $k$, $F(y,\zeta)\leq F(y,\zeta)+k\leq n+k$, 
 and this is the lowest label of a $\zeta'$-particle at $y$. Hence $\sigma_{n+k}\geq y=\sigma'_{n}$.\\ \\
 Next, we show that $\sigma_n\geq\sigma'_{n-k}$ for all $n\in\Z$ implies
 \be\label{later_order}\sigma_n(t)\geq\sigma'_{n-k}(t),\quad\forall n\in\Z\ee
 Setting $\tilde{\sigma'}_n=\sigma'_{n-k}$, we define another increasing labeling of $\zeta'$, 
 and the definition of label's motion implies that $\tilde{\sigma'}_n(t)=\sigma'_{n-k}(t)$ 
 for all $t\geq 0$. Therefore we have to show that $\sigma_n\ge\tilde{\sigma'}_n$ for all  $n\in\Z$ implies
 $\sigma_n(t)\ge\tilde{\sigma'}_n(t)$ for all  $n\in\Z$.
 It is enough to show that this ordering is preserved by the Harris construction each time a jump occurs.
 Assume that at time $t$ a potential jump to the right occurs in the Harris construction. 
 We have to verify the following for any $n\in\Z$:\\ \\
\textit{(1) If a particle at $y\in\Z$ in $\zeta'$ jumps right, and the particle in $\zeta$ 
with the same label is also at $y$, then  the latter particle also jumps right.}\\ \\
 Indeed, since we are assuming $\sigma_n \geq \sigma'_n$ for all $n \in \mathbb{Z}$ 
 we have $F(z,\zeta) \leq F(z,\zeta')$ for all $z \in \mathbb{Z}$.
 Suppose $\sigma_n = \sigma'_n=y$ for some $n\in\Z$, and that $n$ is highest label of $\zeta'$ particles at $y$.
 That is $n = F(y,\zeta') \geq F(y,\zeta)$. If $F(y,\zeta') > F(y,\zeta)$ then $F(y,\zeta) \leq n-1$.
 This implies that $\sigma_n \geq y+1$. Since we assumed $\sigma_n =y$ we conclude that 
 $F(y,\zeta') =F(y,\zeta)=n$.
 Therefore $n$ is the highest label of $\zeta$ particles at $y$.
 Also, $F(y,\zeta) \leq F(y,\zeta')$ for all $y\in \mathbb{Z}$ and  $F(y,\zeta') =F(y,\zeta)$ 
 implies $\zeta(y) \geq \zeta'(y)$.
 Since $g$ is increasing, if a jump occurs from $\zeta'(y)$, then a jump from $\zeta(y)$ must occur.
 \\ \\
\textit{(2) If a particle at $y\in\Z$ in $\zeta$ jumps left, and the particle in $\zeta'$ 
with the same label is also at $y$, then  the latter particle also jumps left.}\\ \\
 Indeed, suppose  $n$ is the smallest label of $\zeta$ particles at $y$.
 Then $n = F(y-1,\zeta)+1$ or $F(y-1,\zeta) = n-1$. If $F(y-1,\zeta') > F(y-1,\zeta)= n-1$ then $\sigma'_n <y$.
 Since we assumed $\sigma'_n= y$, it follows that $F(y-1,\zeta') = F(y-1,\zeta) = n-1$.
 Again, since $\sigma'_n =y$, we have that $n$ is the smallest label of $\zeta'$ particles at $y$.
 Again from the fact that $F(y-1,\zeta') = F(y-1,\zeta)$ and $F(z,\zeta) \leq F(z,\zeta')$ 
 for all $z \in \mathbb{Z}$, it follows that $\zeta(y) \leq \zeta'(y)$.
 Therefore, since $g$ is increasing, we have that if a jump to the left from $y$ occurs in $\zeta$, 
 then it occurs also for $\zeta'$.\\ \\
 To conclude the proof, we use the following definition of current in terms of labels: 
 \begin{eqnarray*}
 \label{current_labels_1}
 \Gamma(t,\zeta) & = & -\inf\{m\in\Z\cap(-\infty,0):\,\sigma_m(t)>x_t\}\mbox{ if }\Gamma(t,\zeta)>0\\
 \label{current_labels_2}
 \Gamma(t,\zeta) & = & -\sup\{n\in\Z\cap[0,+\infty):\,\sigma_n(t)\leq x_t\}\mbox{ if }\Gamma(t,\zeta)\leq 0
 \end{eqnarray*}
 Since $\Gamma(t,\zeta)\geq\Gamma(t,\zeta')-k$ holds if $\Gamma(t,\zeta)\geq 0$ and 
 $\Gamma(t,\zeta')\leq 0$, it is enough to consider the following cases.
 Assume first that $m:=-\Gamma(t,\zeta')<0$, hence $\sigma'_{m-1}(t)\leq x_t<\sigma'_m(t)$. 
 By \eqref{later_order}, $\sigma_{m+k}(t)\geq \sigma'_m(t)>x_t$. Thus 
 $\Gamma(t,\zeta)\geq-(m+k)=\Gamma(t,\zeta')-k$.
 Next, assume that $n:=-\Gamma(t,\zeta)>0$, hence $\sigma_{n-1}(t)\leq x_t<\sigma_n(t)$.
 By \eqref{later_order}, $\sigma'_{n-1-k}(t)\leq \sigma_{n-1}(t)\leq x_t$. 
 Thus $\Gamma(t,\zeta')\leq-n+k=\Gamma(t,\zeta)+k$.
\end{proof}
\mbox{}\\ \\
\begin{corollary}\label{corollary_consequence}
For $y\in\Z$, define the configuration 
\be\label{conf-max}
\eta^{*,y}:=(+\infty)\indicator{(-\infty,y]\cap\Z}
\ee
Then, for every $z\in\Z$ such that $y\leq z$ and every $\zeta\in{\bf X}$,
\be\label{eq_corollary_consequence}
\Gamma_z(t,\zeta)\leq\Gamma_z(t,\eta^{*,y})+\indicator{\{y<z\}}\sum_{x=y+1}^z\zeta(x)
\ee
\end{corollary}
\begin{proof}{Corollary}{corollary_consequence}
Note that, for every $\zeta'\in{\bf X}$  and $u\in\Z$, we have
\be\label{ext_conf}
F_{y}(u,\eta^{*,y})\leq F_{y}(u,\zeta')-\indicator{\{y<u\}}\zeta'(u)
\ee 
and 
\be\label{43bis}
F_y(u,\zeta')=F_z(u,\zeta')+\indicator{\{y<z\}}\sum_{x=y+1}^z \zeta'(x)+\indicator{\{y<u<z\}}\zeta'(u)
\ee 
It follows that
\begin{eqnarray}
F_z(u,\eta^{*,y}) & = & F_y(u,\eta^{*,y})\nonumber\\
& \leq & F_y(u,\zeta)-\indicator{\{y<u\}}\zeta(u)\nonumber\\
& = & F_z(u,\zeta)+\indicator{\{y<z\}}\sum_{x=y+1}^z\zeta(x)+\indicator{\{y<u<z\}}\zeta(u)
 -\indicator{\{y<u\}}\zeta(u)\nonumber\\
& \leq & F_z(u,\zeta)+\indicator{\{y<z\}}\sum_{x=y+1}^z\zeta(x)\label{43plus43bis}
\end{eqnarray}
where we used \eqref{ext_conf} on the second line with $\zeta'=\zeta$, 
and \eqref{43bis} on the first line for $\zeta'=\eta^{*,y}$, and on the third  line with $\zeta'=\zeta$.
Combining  \eqref{43plus43bis} and Lemma \ref{lemma_current} yields the result.
\end{proof}
\subsection{Hydrodynamic limits}
It follows from \eqref{eq:theta-lambda} that 
\be\label{mean_rate}
\forall x\in\Z,\,\forall\alpha\in{\bf A},\quad\int_{\bf X}g(\eta(0))d\theta_\lambda(\eta(0))=\lambda=
\int_{\bf X}\alpha(x)g(\eta(x))d\mu^\alpha_\lambda(\eta)
\ee
The quantity
$$
\int_{\bf X}[p\alpha(x)g(\eta(x))-q\alpha(x+1)g[\eta(x+1)]d\mu^\alpha_\lambda(\eta)=(p-q)\lambda
$$
is the stationary current under $\mu^\alpha_\lambda$. As a function of 
the mean density $\overline{R}(\lambda)$, the current writes
\be\label{def_flux}
f(\rho):=(p-q)\overline{R}^{-1}(\rho)
\ee
 We state its basic properties in the following lemma.
\begin{lemma}\label{lemma_properties_flux}
The functions $\overline{R}$  and
$f$ are increasing and $C^\infty$, respectively from $[0,c]$ to $[0,\rho_c]$ and 
from $[0,\rho_c]$ to $[0,c]$.  Besides, $\overline{R}$ is strictly convex if $R$ is strictly convex.
\end{lemma}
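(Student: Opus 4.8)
The plan is to realize $\overline{R}$ as the locally uniform limit of the finite averages $\overline{R}_n(\lambda):=n^{-1}\sum_{x=-n}^{0}R(\lambda/\alpha(x))$, upgrading the merely pointwise convergence guaranteed by \eqref{average_afgl} to convergence of all derivatives. The crucial structural input is that every $\alpha(x)\in(c,1]$: hence, for $\lambda$ ranging over a compact interval $[0,\lambda_0]\subset[0,c)$, the arguments $\lambda/\alpha(x)$ all lie in the fixed compact set $[0,\lambda_0/c]\subset[0,1)$, uniformly in $x$. Since $R$ is $C^\infty$ on $[0,1)$, the chain rule gives $|\partial_\lambda^j\overline{R}_n(\lambda)|\le c^{-j}\sup_{[0,\lambda_0/c]}|R^{(j)}|$ for each $j\in\N$, uniformly in $n$ and in $\lambda\in[0,\lambda_0]$; thus $(\overline{R}_n)_n$ is bounded in $C^j([0,\lambda_0])$ for every $j$.

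I would then combine these uniform derivative bounds with the pointwise convergence. By Arzel\`a--Ascoli and a diagonal extraction, every subsequence of $(\overline{R}_n)_n$ admits a further subsequence converging in $C^j([0,\lambda_0])$ for all $j$; uniform convergence of the derivatives forces the limit to coincide with the pointwise limit $\overline{R}$ and shows that $\overline{R}\in C^\infty([0,c))$ with $\overline{R}^{(j)}=\lim_n\overline{R}_n^{(j)}$ locally uniformly. Monotonicity comes from $\overline{R}_n'(\lambda)=n^{-1}\sum_x\alpha(x)^{-1}R'(\lambda/\alpha(x))\ge\min_{[0,\lambda_0/c]}R'>0$, using $\alpha(x)^{-1}\ge1$ and $R'>0$ (the exponential family $\theta_\lambda$ has positive variance); hence $\overline{R}'>0$ and $\overline{R}$ is strictly increasing on $[0,c)$. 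Together with $\overline{R}(0)=R(0)=0$ and the convention $\overline{R}(c):=\rho_c$ from \eqref{extend_R}, this makes $\overline{R}$ a continuous strictly increasing bijection from $[0,c]$ onto $[0,\rho_c]$; smoothness is asserted only on $[0,c)$, in accordance with the fact that $\overline{R}'(c)$ need not exist.

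The assertions on $f=(p-q)\overline{R}^{-1}$ follow from the inverse function theorem: as $\overline{R}$ is $C^\infty$ with $\overline{R}'>0$ on $[0,c)$, its inverse is $C^\infty$ with positive derivative on $[0,\rho_c)$, so $f$ is increasing and $C^\infty$ there, extends continuously to $[0,\rho_c]$, and takes values in $[0,c]$ (its range being $[0,(p-q)c]$). For the convexity statement, suppose $R$ is strictly convex and fix $\lambda_1<\lambda_2$ in $[0,c)$. For every $x$ the points $a:=\lambda_1/\alpha(x)$ and $b:=\lambda_2/\alpha(x)$ lie in $[0,\lambda_2/c]$ and satisfy $b-a\ge\lambda_2-\lambda_1>0$. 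The midpoint defect $\phi(a,b):=\frac{1}{2}(R(a)+R(b))-R(\frac{a+b}{2})$ is continuous, nonnegative, and strictly positive off the diagonal, hence bounded below by some $\delta>0$ on the compact set $\{(a,b)\in[0,\lambda_2/c]^2:\,b-a\ge\lambda_2-\lambda_1\}$. Averaging over $x$ yields $\overline{R}_n(\frac{\lambda_1+\lambda_2}{2})\le\frac{1}{2}(\overline{R}_n(\lambda_1)+\overline{R}_n(\lambda_2))-\delta$; letting $n\to\infty$ gives the same midpoint inequality for $\overline{R}$, which with continuity yields strict convexity.

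The main obstacle is the upgrade carried out in the second paragraph: \eqref{average_afgl} supplies only pointwise convergence, and the derivatives of a pointwise limit are in general not the limits of the derivatives. The mechanism that rescues the argument is the disorder bound $\alpha(x)>c$, which keeps every argument $\lambda/\alpha(x)$ inside a common compact subinterval of $[0,1)$ and thereby produces the equicontinuity of all derivatives required for the compactness argument.
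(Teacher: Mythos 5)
Your proof is correct, but it follows a genuinely different route from the paper's. The paper extracts a weakly convergent subsequence of the empirical measures $Q_n=\frac{1}{n+1}\sum_{x=-n}^0\delta_{\alpha(x)}$ of the disorder, uses the assumed existence of the limit \eqref{average_afgl} to identify $\overline{R}(\lambda)=\int_{(c,1]}R(\lambda/\alpha)\,Q(d\alpha)$, and then differentiates under the integral sign; monotonicity and convexity are read off from the integral formulas for $\overline{R}'$ and $\overline{R}''$. You instead work directly with the Ces\`aro sums $\overline{R}_n$, use the bound $\alpha(x)>c$ to get uniform control of all derivatives on compact subsets of $[0,c)$, and upgrade the pointwise convergence to $C^\infty_{\mathrm{loc}}$ convergence via Arzel\`a--Ascoli and the every-subsequence argument. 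Both proofs rest on the same structural fact --- that the arguments $\lambda/\alpha(x)$ stay in a fixed compact subset of $[0,1)$ --- so neither is more general; the paper's version is shorter once the representation via $Q$ is granted, while yours avoids introducing the limit measure at the cost of a compactness argument. One point where your version is actually more careful is strict convexity: the paper applies "the same argument" to $R''$, which tacitly assumes $R''>0$, whereas strict convexity of $R$ does not by itself force $R''>0$ at every point; your midpoint-defect argument, with the uniform lower bound $\delta>0$ on the compact set $\{b-a\geq\lambda_2-\lambda_1\}$, circumvents this and correctly delivers strict (not just weak) convexity of $\overline{R}$ in the limit.
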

\begin{proof}{Lemma}{lemma_properties_flux}
Since $\alpha(.)$ is a bounded sequence with values in $(c,1]$, we can find an infinite subset $I$ of $\N$ and a 
probability measure $Q$ on $[c,1]$ such that the limit
$Q_n\to Q$, as  $n\to+\infty$ in $I$,
holds in the topology of weak convergence, where
$$
Q_n:=\frac{1}{n+1}\sum_{x=-n}^0 \delta_{\alpha(x)}
$$
Since $R\in C^\infty([0,1))$,  for any $\lambda\in [0,c)$, the function 
$\alpha\mapsto R(\lambda/\alpha)$ lies in $C^\infty([c,1])$. Thus
$$
\overline{R}(\lambda)=\int_{(c,1]}R\left(
\frac{\lambda}{\alpha}
\right)Q(d\alpha)
$$
It follows that $\overline{R}\in C^\infty([0,c))$, and
$$
\overline{R}'(\lambda)=\int_{(c,1]}\frac{1}{\alpha}R'\left(
\frac{\lambda}{\alpha}
\right)Q(d\alpha)
$$
Since $R'>0$ on $(0,1)$, the above expression implies $\overline{R}'>0$ on $(0,c)$. 
The same argument applied to $R''$ shows that, if $R$ is strictly convex,
then $\overline{R}$ inherits this property. 
\end{proof}\\ \\
The expected hydrodynamic equation for the limiting density field $\rho(t,x)$ 
of the subcritical disordered zero-range process is
\be\label{conservation_law}
\partial_t \rho(t,x)+\partial_x f[\rho(t,x)]=0
\ee
Hydrodynamic limit of homogeneous asymmetric zero-range processes was established 
by \cite{rez} (see also \cite{kl}).
Convergence of general disordered zero-range processes to the entropy solution of 
\eqref{conservation_law} is proved in \cite{bfl} for subcritical Cauchy data.
For more general Cauchy data (that is, data with certain density values above $\rho_c$), 
the hydrodynamic limit was derived in \cite{ks} in the special case \eqref{special_zrp} 
of the totally asymmetric constant rate model.
The result of \cite{ks} includes the case of a source initial condition, which can 
be viewed as a particular supercritical datum.
For our purpose, we need hydrodynamic limit for a general nearest-neighbour zero-range  
process starting with a source, which does not follow from \cite{bfl} and \cite{ks}.
Besides, we also need a strong local equilibrium statement which, to our knowledge, 
is not available in the disordered or non-convex setting. We recall that strong local 
equilibirum was derived for the homogeneous zero-range process with strictly convex flux in \cite{lan}.
However, the method used there relies on translation invariance of the dynamics, 
which fails in the disordered case. The strategy introduced 
in \cite{bgrs4}, where shift invariance is restored by considering the joint 
disorder-particle process, is not feasible either. Therefore another approach
is required here.\\ \\
The extensions we need are carried out in \cite{bmrs2}, whose results are now stated. 
We consider the  process $(\eta_s^{\alpha,t})_{s\geq 0}$
whose initial configuration is  of the form  (with the convention $(+\infty)\times 0=0$)
\be\label{def_init_source}
\eta^{\alpha,t}_0(x)  =  (+\infty)\indicator{\{x\leq x_t\}}
\ee
This process is a semi-infinite process with a source/sink at $x_t$: 
with rate $p\alpha(x_t)$, a particle is created at $x_t+1$,
with rate $q \alpha(x_t+1)g(\eta(x_t+1))$ a particle at $x_t+1$ is destroyed.\\ \\
For $v\geq 0$, define $\lambda^-(v)$ 
as the smallest
maximizer of  $\lambda\mapsto (p-q)\lambda-v\overline{R}(\lambda)$
over $\lambda\in[0,c]$. Equivalently, $\mathcal R(v):=\overline{R}[\lambda^-(v)]$ is the smallest
maximizer of  $\rho\mapsto f(\rho)-v\rho$
over $\rho\in[0,\rho_c]$.
We also define the Lagrangian, that is the Legendre transform of the current (or Hamiltonian): for $v\in\R$,
\begin{eqnarray}\label{lagrangian}
f^*(v) & := &
\sup_{\rho\in[0,\rho_c]}[f(\rho)-v\rho]
=\sup_{\lambda\in[0,c]}[(p-q)\lambda-v\overline{R}(\lambda)]
\end{eqnarray}
 From standard convex analysis (\cite{roc}), we have that 
\be\label{convex_anal}
\mathcal R(v)=-(f^*)'(v+)
\ee
where $(f^{*})'(v+)$ denotes the right-hand derivative of the convex function $f^*$.
The concave envelope of $f$ is defined by
\be\label{fdoublestar}
\hat{f}(\rho):=f^{**}(\rho):=\inf_{v\in\R}[\rho v+f^*(v)]=\inf_{v\geq 0}[\rho v+f^*(v)]
\ee
The second equality follows from the fact that $f$ is nondecreasing. Indeed, in this case,
 \eqref{lagrangian} implies
that for $v\leq 0$,
$$
f^*(v)=f(\rho_c)-v\rho_c=(p-q)c-v\rho_c
$$
and plugging this into the second member of \eqref{fdoublestar} shows that the infimum 
can be restricted to $v\geq 0$.
It follows from \eqref{convex_anal} that $\mathcal R$ is a nonincreasing and right-continuous function.
\begin{proposition}\label{th_strong_loc_eq} 
Assume that $x_t$ in \eqref{def_init_source} is such that $\beta:=\lim_{t\to+\infty}t^{-1}x_t$, 
with $\beta<0$, and that 
\eqref{assumption_afgl} is satisfied. Then
statement \eqref{hdl_source} below
holds 
for $v\in(0,-\beta]$, 
and statement \eqref{the_first_one} below holds for $v_0<v<-\beta$ and $h:\N^\Z\to\R$ 
 a bounded local increasing function: 
\begin{eqnarray}
\lim_{t\to\infty}\Exp
\left\vert
t^{-1}\sum_{x>x_t+\lfloor vt\rfloor}\eta^{\alpha,t}_t(x)-f^*(v)\right\vert
& = & 0
\label{hdl_source}\\
\label{the_first_one}
\liminf_{t\to\infty}\left\{
\Exp h\left(\tau_{\lfloor x_t+vt\rfloor}\eta_t^{\alpha,t}\right)-\int_{\bf X} h(\eta)d\mu_{\lambda^-(v)}^{\tau_{\lfloor x_t+vt\rfloor}\alpha}(\eta)
\right\}\geq 0
\end{eqnarray}
where $\tau$ denotes the shift operator acting on environments by 
$(\tau_y \alpha)(.)=\alpha(y+.)$ for any $y\in\Z$ and $\alpha\in{\bf A}$.
\end{proposition}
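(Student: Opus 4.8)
Reading the left-hand side of \eqref{hdl_source} as a current is the natural first move. With $z_t:=x_t+\lfloor vt\rfloor$ and the constant path at $z_t$, formula \eqref{current} applies because the source datum \eqref{def_init_source} carries no mass to the right of $x_t\le z_t$ (recall $v>0$), giving $\sum_{x>z_t}\eta^{\alpha,t}_t(x)=\Gamma_{z_t}(t,\eta^{*,x_t})$ with $\eta^{*,x_t}$ the maximal configuration \eqref{conf-max}. Thus \eqref{hdl_source} is the assertion $t^{-1}\Gamma_{z_t}(t,\eta^{*,x_t})\to f^*(v)$ in $L^1(\Prob)$. I would first check the target is the expected one: the maximal datum should relax to the rarefaction-fan entropy solution of \eqref{conservation_law}, of density $\mathcal R(v)$ at speed $v$, and integrating the profile while using \eqref{convex_anal} and $f^*(+\infty)=0$ (immediate from \eqref{lagrangian}, the supremum being attained at $\rho=0$ for large $v$) gives $\int_v^{+\infty}\mathcal R(w)\,dw=-\int_v^{+\infty}(f^*)'(w+)\,dw=f^*(v)$, consistent with \eqref{hdl_source}.

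The lower bound $\liminf_t t^{-1}\Exp\Gamma_{z_t}(t,\eta^{*,x_t})\ge f^*(v)$ is, I expect, the tractable half. Apply Corollary \ref{corollary_consequence} with $y=x_t$, $z=z_t$ and $\zeta$ sampled from the stationary measure $\mu^\alpha_\lambda$, $\lambda\le c$, to get $\Gamma_{z_t}(t,\eta^{*,x_t})\ge\Gamma_{z_t}(t,\mu^\alpha_\lambda)-\sum_{x=x_t+1}^{z_t}\zeta(x)$. Taking $\Exp_0\Exp$ and dividing by $t$, the stationary term gives $t^{-1}\Exp_0\Exp\,\Gamma_{z_t}(t,\mu^\alpha_\lambda)=(p-q)\lambda$ by \eqref{mean_rate}, while the correction $t^{-1}\sum_{x=x_t+1}^{z_t}R(\lambda/\alpha(x))$ tends to $v\overline R(\lambda)$: for $v\le-\beta$ the block lies to the left of the origin, and writing it as the difference of the Ces\`aro sums over $[x_t+1,0]$ and $[z_t+1,0]$ and applying \eqref{average_afgl} twice yields $(-\beta)\overline R(\lambda)-(-\beta-v)\overline R(\lambda)=v\overline R(\lambda)$. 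Optimizing over $\lambda\le c$ and invoking \eqref{lagrangian} produces $f^*(v)$; note this uses only \eqref{average_afgl} and explains the range $v\in(0,-\beta]$.

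The matching upper bound is the genuinely hard half: the maximal datum admits no configuration above it, so the bound cannot come from monotone comparison and must instead express that the supercritical mass of $\eta^{*,x_t}$ dissipates and that the macroscopic profile is precisely the entropy rarefaction solution with ceiling $\rho_c$. This is the new hydrodynamic input, extending the subcritical result of \cite{bfl} and the constant-rate result of \cite{ks} to a disordered source, and it is here that \eqref{assumption_afgl} should be used, to ensure the slow sites are dense enough for $\rho_c$ to be the correct maximal density at the slow left edge of the fan. Granting \eqref{hdl_source}, the one-sided local equilibrium \eqref{the_first_one} would then follow by attractiveness: as $h$ is nondecreasing and only a $\liminf\ge0$ is required, it suffices to dominate $\tau_{z_t}\eta^{\alpha,t}_t$ from below by a process stationary under $\mu^{\tau_{z_t}\alpha}_{\lambda^-(v)}$, whose density $\mathcal R(v)=\overline R[\lambda^-(v)]$ is pinned by \eqref{hdl_source}; the finite-propagation property, Lemma \ref{lemma_finite_prop}, localises the law at $z_t$ to a mesoscopic window in which such an ordered coupling can be built and preserved, giving $\Exp h(\tau_{z_t}\eta^{\alpha,t}_t)\ge\int h\,d\mu^{\tau_{z_t}\alpha}_{\lambda^-(v)}-o(1)$. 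The range $v_0<v<-\beta$ keeps the observation point strictly inside the fan, at a subcritical density $\lambda^-(v)<c$, and left of the origin.

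The main obstacle is this upper-bound-plus-local-equilibrium step: identifying the limit first as the entropy solution and then as the product measure $\mu_{\lambda^-(v)}$. In the homogeneous strictly convex case the latter is Landim's strong local equilibrium, whose proof rests on translation invariance of the dynamics; disorder destroys this, and the device of \cite{bgrs4} that restores shift invariance via the joint disorder--particle process is not available here. I therefore expect the heart of the argument to be a new localisation scheme extracting the local law from the one-sided current bounds and attractiveness alone, bypassing the usual one- and two-block estimates, with the delicate point being to make the matching to $\mu_{\lambda^-(v)}$ at the non-trivial density $\mathcal R(v)$ uniform enough to survive $t\to\infty$.
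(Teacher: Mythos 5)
The paper does not prove Proposition \ref{th_strong_loc_eq}: it is imported wholesale from the companion work \cite{bmrs2} (``The extensions we need are carried out in \cite{bmrs2}, whose results are now stated''), so there is no in-paper argument to compare yours against. Judged on its own terms, the half of your proposal that you actually carry out is sound. The identification $\sum_{x>z_t}\eta^{\alpha,t}_t(x)=\Gamma_{z_t}(t,\eta^{*,x_t})$ via \eqref{current}, the lower bound obtained from Corollary \ref{corollary_consequence} with $y=x_t$, $z=z_t$ and a stationary comparison configuration $\zeta\sim\mu^\alpha_\lambda$, the evaluation of the stationary current as $(p-q)\lambda$ via \eqref{mean_rate}, the computation of the correction term $v\overline{R}(\lambda)$ as a difference of Ces\`aro averages using \eqref{average_afgl} (which is why $v\le-\beta$ is needed, so that the block $(x_t,z_t]$ stays left of the origin), and the optimization over $\lambda$ producing $f^*(v)$ are all correct; this matches the heuristic the paper itself gives just after the proposition. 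Your consistency check $f^*(v)=\int_v^{+\infty}\mathcal R(w)\,dw$ is also right.

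However, what you present is not a proof of the proposition. The upper bound in \eqref{hdl_source} and the whole of \eqref{the_first_one} --- which are the substantive content of the statement and the reason the authors devote a separate paper to it --- are only described as ``the genuinely hard half'' and ``the heart of the argument,'' with a list of obstacles (failure of translation invariance, inapplicability of the method of \cite{lan} and of the joint disorder--particle device of \cite{bgrs4}) that paraphrases the paper's own discussion rather than resolving it. In particular, your suggested route to \eqref{the_first_one} --- ``dominate $\tau_{z_t}\eta^{\alpha,t}_t$ from below by a process stationary under $\mu^{\tau_{z_t}\alpha}_{\lambda^-(v)}$'' --- is precisely the construction whose existence is at issue: attractiveness propagates an ordering of initial data, but the source datum $\eta^{*,x_t}$ dominates every configuration, so the required domination from below at time $t$ cannot come from an ordering at time $0$ and must instead be extracted from the current asymptotics; that extraction is the missing step. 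A smaller point: \eqref{hdl_source} asserts $L^1$ convergence, whereas your lower bound is only a bound on expectations, so even after an upper bound one would still need concentration (e.g.\ an almost-sure or in-probability two-sided estimate) to conclude. The proposal should therefore be read as a correct proof of the expectation lower bound in \eqref{hdl_source} together with an accurate map of what remains, not as a proof of Proposition \ref{th_strong_loc_eq}.
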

Statement
\eqref{hdl_source}  deals with the hydrodynamics away from the source, while 
\eqref{the_first_one} is a strong local equilibrium statement. Remark that the latter 
differs from the standard strong local equilibrium property in the homogeneous setting, 
since instead of a fixed limiting equilibrium measure, one has an equilibrium measure 
moving along the disorder. We have actually stated in \eqref{the_first_one} only half 
of the complete local equilibrium statement established in \cite{bmrs2} (which also 
includes a reverse inequality and the largest maximizer), because it is sufficient for our current purpose.\\ \\
The values $f^*(v)$ and $\lambda^-(v)$ in \eqref{hdl_source}--\eqref{the_first_one} can be 
understood as follows, see also \cite{bgrs} for a similar variational formula in a different context.
The source process is compared with an equilibrium process with distribution $\mu^\alpha_\lambda$, 
which has asymptotic current $(p-q)\lambda$ and mean density $\overline{R}(\lambda)$. Hence
its current across an observer travelling with speed $v$ is $(p-q)\lambda-v\overline{R}(\lambda)$. 
The source  has a bigger current, which thus
dominates the supremum of these equilibrium currents, that is $f^*(v)$ defined in \eqref{lagrangian}. 
On the other hand, if one admits that the source process around the observer is close to some 
equilibrium process,
then the current must be $(p-q)\lambda-v\overline{R}(\lambda)$ for {\em some} 
(possibly random) $\lambda\in[0,c]$, hence  dominated by $f^*(v)$, and 
this $\lambda$ is a  maximizer  of \eqref{lagrangian}, hence dominating $\lambda^-(v)$. \\ \\
In the sequel, the following quantity will play an important role:
\be\label{def_v0}
v_0:=(p-q)\inf_{\lambda\in [0,c)}\frac{c-\lambda}{\overline{R}(c)-\overline{R}(\lambda)}
\ee
This quantity can be interpreted as the speed of a front  of uniform density $\rho_c$ issued 
by the source. Assumption (H) is equivalent to the infimum in \eqref{def_v0}  being achieved 
uniquely for $\lambda$ tending to $c$, which in turn is equivalent to 
\be\label{equal_v0}
v_0=(p-q)\overline{R}^{'+}(c)^{-1}\in[0,+\infty)
\ee
where $\overline{R}^{'+}$ was defined in \eqref{def_upperdiff}.
Let $\lambda_0$ denote the smallest minimizer of \eqref{def_v0}, or $\lambda_0=c$ 
if the infimum in \eqref{def_v0} is achieved 
only for $\lambda$ tending to $c$, that is under condition (H). The following lemma 
shows that $\overline{R}(\lambda_0)$ is the density observed right behind the front.
\begin{lemma}\label{lemma_entropy}
\mbox{}\\ \\
(i) For every $v<v_0$, $\lambda^-(v)=c$.\\ \\
(ii) 
For every $v>v_0$, $\lambda^-(v)<\lambda_0$, and $\lim_{v\downarrow v_0}\lambda^-(v)=\lambda_0$.
\end{lemma}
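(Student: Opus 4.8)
The plan is to exploit the characterization $\mathcal R(v)=\overline{R}[\lambda^-(v)]=-(f^*)'(v+)$ from \eqref{convex_anal}, and to relate the quantity $v_0$ defined in \eqref{def_v0} to the behaviour of the concave envelope $\hat f$ of $f$ near the critical point $\rho_c=\overline{R}(c)$. The key observation is that $\lambda^-(v)=c$ (equivalently $\mathcal R(v)=\rho_c$) holds precisely when $\rho=\rho_c$ is a maximizer of $\rho\mapsto f(\rho)-v\rho$ over $[0,\rho_c]$; since $f$ is increasing, this amounts to $f(\rho_c)-v\rho_c\geq f(\rho)-v\rho$ for all $\rho\in[0,\rho_c]$, i.e. $v\leq\inf_{\rho<\rho_c}\frac{f(\rho_c)-f(\rho)}{\rho_c-\rho}$. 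Rewriting this infimum in terms of $\lambda$ via $\rho=\overline{R}(\lambda)$, $f(\rho)=(p-q)\lambda$ and $f(\rho_c)=(p-q)c$, I recover exactly the quantity $v_0$ in \eqref{def_v0}.

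First I would prove part (i). For $v<v_0$, the displayed inequality above gives $f(\rho_c)-v\rho_c>f(\rho)-v\rho$ for every $\rho<\rho_c$, so $\rho_c$ is the unique maximizer of $\rho\mapsto f(\rho)-v\rho$, hence $\mathcal R(v)=\rho_c$ and therefore $\lambda^-(v)=c$. (Here I use that $\lambda^-(v)$ is the \emph{smallest} maximizer and that $\overline{R}$ is strictly increasing by Lemma \ref{lemma_properties_flux}, so the value $\mathcal R(v)=\rho_c$ determines $\lambda^-(v)=c$ unambiguously.) A small point to check is the boundary: the infimum defining $v_0$ is over $\lambda\in[0,c)$, matching $\rho<\rho_c$, so strict inequality $v<v_0$ indeed forces strict dominance at every competing $\rho$.

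Next, for part (ii), I would argue that for $v>v_0$ the maximizer can no longer be $\rho_c$. By definition of $v_0$ as an infimum, for $v>v_0$ there exists some $\lambda<c$ with $\frac{(p-q)(c-\lambda)}{\overline{R}(c)-\overline{R}(\lambda)}<v$, i.e. $(p-q)\lambda-v\overline{R}(\lambda)>(p-q)c-v\rho_c$, which says the equilibrium objective is strictly larger at this $\lambda$ than at $c$; hence $\lambda^-(v)\neq c$, and in fact $\lambda^-(v)<\lambda_0$ once I identify $\lambda_0$ as the smallest minimizer of \eqref{def_v0}. The claim $\lambda^-(v)<\lambda_0$ should follow from monotonicity of $\mathcal R$ (right-continuous and nonincreasing, from \eqref{convex_anal}) together with the fact that $\lambda_0$ is where the infimum ratio is attained: for $v$ just above $v_0$, the maximizing $\lambda$ is pushed strictly below $\lambda_0$. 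The limit statement $\lim_{v\downarrow v_0}\lambda^-(v)=\lambda_0$ I would get from right-continuity of $v\mapsto\lambda^-(v)$ (inherited from right-continuity of $\mathcal R$ and strict monotonicity of $\overline R$) evaluated at $v_0$, after checking $\lambda^-(v_0)=\lambda_0$.

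The main obstacle I anticipate is handling the geometry cleanly when (H) fails versus holds. Under (H) the infimum in \eqref{def_v0} is attained only in the limit $\lambda\uparrow c$, so $\lambda_0=c$ by convention, and then part (ii) asserts $\lambda^-(v)<c$ for all $v>v_0$ with $\lambda^-(v)\uparrow c$ as $v\downarrow v_0$ — this is the continuity of the envelope's derivative at the critical point. Without (H), $\lambda_0<c$ is a genuine minimizer and the concave envelope $\hat f$ has an affine piece; I must ensure that ``smallest maximizer'' selects $\lambda_0$ exactly and that the strict inequality $\lambda^-(v)<\lambda_0$ survives. The delicate quantitative step is translating the \emph{infimum}/\emph{smallest minimizer} structure of \eqref{def_v0} into the \emph{smallest maximizer} structure of $\mathcal R(v)$ through the Legendre duality \eqref{convex_anal}, keeping track of which one-sided inequalities are strict; this bookkeeping, rather than any deep estimate, is where care is needed.
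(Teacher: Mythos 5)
Your part (i) is correct and is essentially the paper's argument transported to the $\rho$ variable via $\rho=\overline{R}(\lambda)$: strict inequality in the definition of $v_0$ makes $c$ (equivalently $\rho_c$) the unique maximizer. The problems are in part (ii), where the two statements you flag as ``to be checked'' are precisely the content of the lemma, and neither is actually proved. First, for $v>v_0$ you only establish $\lambda^-(v)\neq c$. That yields $\lambda^-(v)<\lambda_0$ only when $\lambda_0=c$, i.e.\ under (H); but the lemma is stated for general $\lambda_0$, the smallest minimizer of \eqref{def_v0}, which may be strictly less than $c$. Monotonicity of $\mathcal R$ gives at most $\lambda^-(v)\leq\lambda^-(v_0)$, a non-strict inequality, and ``the maximizing $\lambda$ is pushed strictly below $\lambda_0$'' is a restatement of the claim, not an argument. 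The paper proves the strict inequality by contraposition: if $\lambda^-(v)\geq\lambda_0$, then comparing $(p-q)\lambda-v\overline{R}(\lambda)$ with its value at the maximizer for every $\lambda<\lambda_0$ gives $v\leq(p-q)\inf_{\lambda<\lambda_0}\bigl(\lambda^-(v)-\lambda\bigr)/\bigl(\overline{R}[\lambda^-(v)]-\overline{R}(\lambda)\bigr)$, and this infimum is then bounded by $v_0$ using that $\lambda_0$ attains the infimum in \eqref{def_v0}. Nothing of this chord-comparison step appears in your sketch.

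Second, your route to $\lim_{v\downarrow v_0}\lambda^-(v)=\lambda_0$ (right-continuity of $\mathcal R=-(f^*)'(\cdot+)$, composed with the continuous inverse of $\overline{R}$) is a legitimate and arguably cleaner alternative to the paper's argument, \emph{but} it pivots entirely on the identity $\lambda^-(v_0)=\lambda_0$, which you defer and never verify. Establishing it requires exactly the ``smallest minimizer'' bookkeeping you postpone: (a) $\lambda_0$ is a maximizer of $\lambda\mapsto(p-q)\lambda-v_0\overline{R}(\lambda)$ because it attains the infimum in \eqref{def_v0} (or, under (H), because $c$ is then the unique maximizer at $v_0$); and (b) no maximizer smaller than $\lambda_0$ exists, since such a $\lambda$ would satisfy $(p-q)(c-\lambda)=v_0\bigl(\overline{R}(c)-\overline{R}(\lambda)\bigr)$ and hence also minimize \eqref{def_v0}, contradicting minimality of $\lambda_0$. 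The paper avoids this identity altogether: it takes $v_n\downarrow v_0$, notes that $\lambda_n=\lambda^-(v_n)$ is nondecreasing and bounded above by $\lambda_0$ (by the first part), and derives from $(p-q)\lambda_n-v_n\overline{R}(\lambda_n)\geq(p-q)\lambda_0-v_n\overline{R}(\lambda_0)$ a contradiction with ``$\lambda_0$ is the smallest minimizer'' if $\lim\lambda_n<\lambda_0$. Either route is fine, but the minimality of $\lambda_0$ must be invoked somewhere, and in your proposal it never is; as written, part (ii) is a plan rather than a proof.
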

\begin{proof}{Lemma}{lemma_entropy}
\mbox{}\\ \\
{\em Proof of (i).} Assume $v<v_0$. Then by definition \eqref{def_v0} of $v_0$, for every $\lambda\in[0,c)$,
$$
(p-q)\lambda-v\overline{R}(\lambda)<(p-q)c-v\overline{R}(c)
$$
Thus $c$ is the unique maximizer in \eqref{lagrangian}, which implies the result.\\ \\
{\em Proof of (ii).}
Assume first that $\lambda^-(v)\geq\lambda_0$ for some $v\geq 0$. Then, for every $\lambda\in[0,\lambda_0)$,
$$
(p-q)\lambda-v\overline{R}(\lambda)\leq(p-q)\lambda^-(v)-v\overline{R}[\lambda^-(v)]
$$
Hence,
$$ 
v\leq (p-q)\inf_{\lambda\in[0,\lambda_0)}\frac{\lambda^-(v)
-\lambda}{\overline{R}[\lambda^-(v)]-\overline{R}(\lambda)}=:v_1
$$
If $\lambda^-(v)=\lambda_0$, then $v_1= v_0$. If $\lambda^-(v)>\lambda_0$,
\eqref{def_v0} implies
$$
(p-q)\frac{\lambda^-(v)-\lambda_0}{\overline{R}[\lambda^-(v)]-\overline{R}(\lambda_0)}\leq v_0
$$
which in turn implies $v_1\leq v_0$.
\\ \\
Let now $(v_n)_{n\in\N\setminus\{0\}}$ be a decreasing sequence of real numbers such 
that $\lim_{n\to+\infty}\lambda^-(v_n)=v_0$, and 
set $\lambda_n:=\lambda^-(v_n)$. The sequence $(\lambda_n)_{n\in\N\setminus\{0\}}$ is nondecreasing 
and nonnegative. The above implies that it is bounded above by $\lambda_0$. Let 
$\lambda\in [0,\lambda_0]$ denote its limit, and assume $\lambda<\lambda_0$. By definition of $\lambda_n$,
$$
(p-q)\lambda_n-v_n\overline{R}(\lambda_n)\geq (p-q)\lambda_0-v_n\overline{R}(\lambda_0)
$$
Passing to the limit as $n\to+\infty$, we obtain the contradiction
$$
v_0\geq(p-q)\frac{\lambda_0-\lambda}{\overline{R}(\lambda_0)-\overline{R}(\lambda)}>v_0
$$
where the strict inequality follows from the fact that $\lambda_0$ is the smallest 
minimizer in \eqref{def_v0}. 
\end{proof}
\section{Proof of Theorem \ref{proposition_afgl}}\label{sec_afgl}
Let $\alpha\in{\bf A}$, $l<0<r$, and $\overline{\eta}_0\in{\bf X}$ a 
configuration  such that $\overline{\eta}_0(x)\in\N$ for 
$x\in(l,r)$, $\overline{\eta}_0(l)=\overline{\eta}_0(r)=+\infty$.
Consider the process $(\overline{\eta}^{\alpha}_t)_{t\geq 0}$, with initial configuration 
$\overline{\eta}_0$, and 
generator $L^{\alpha}$ (see \eqref{generator}). The restriction 
$(\eta_t^{\alpha,l,r})_{t\geq 0}$ of $(\overline{\eta}_t^{\alpha})_{t\geq 0}$ to 
$(l,r)$ is a Markov process on $\N^{(l,r)}$ with generator given by,  
for an arbitrary (since $g$ is bounded)  function $f$ on  $\N^{(l,r)}$,
\begin{eqnarray}\nonumber
L^{\alpha,l,r}f(\eta)&=&\sum_{x=l+1}^{r-2}
p\alpha(x)g(\eta(x))[f(\eta^{x,x+1})-f(\eta)]\\
\nonumber & + & \sum_{x=l+1}^{r-2} q\alpha(x+1)g(\eta(x+1))[f(\eta^{x+1,x})-f(\eta)]\\
\nonumber & + & qg(\eta(l+1))[f(\eta-\delta_{l+1})-f(\eta)] \\ \nonumber
\nonumber & + & pg(\eta(r-1))[f(\eta-\delta_{r-1})-f(\eta)]\\ \nonumber 
\nonumber &+ & p\alpha(l)[f(\eta+\delta_{l+1})-f(\eta)]\\
\label{gen_open} & + & q\alpha(r)[f(\eta+\delta_{r-1})-f(\eta)]
\end{eqnarray} 
The above process is an open Jackson network, whose invariant measure is 
well-known in queuing theory. In our case this measure is explicit:
\begin{lemma}\label{lem-mu_c'}
Set
\be\label{def_lambda_inv}
\lambda^{\alpha,l,r}(x)=\frac{\alpha(r)-\alpha(l)}{1-\left(\frac{q}{p}\right)^{r-l}}
\left(\frac{q}{p}\right)^{r-x}+
\frac{\alpha(l)-\alpha(r)\left(\frac{q}{p}\right)^{r-l}}{1-\left(\frac{q}{p}\right)^{r-l}}
\in[\alpha(l),\alpha(r)]
\ee
The process with generator \eqref{gen_open} is positive recurrent if and only if 
\be\label{cond_rec}\lambda^{\alpha,l,r}(x)<\alpha(x),\quad\forall x\in(l,r)\cap\Z\ee
If condition \eqref{cond_rec} is satisfied, the unique invariant measure of 
the process is the product measure
$\mu^{\alpha,l,r}$ on $\N^{(l,r)\cap\Z}$ with marginal 
$\theta_{\lambda(x)/\alpha(x)}$ at site $x\in(l,r)\cap\Z$.
\end{lemma}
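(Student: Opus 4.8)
The plan is to recognize the process \eqref{gen_open} as an open Jackson network and apply the product-form theory from queueing, but with the specific twist that the arrival/service rates here are of zero-range type. The strategy splits into two parts: first, identify the candidate invariant measure explicitly and verify it satisfies the stationary (detailed-balance-like) equations; second, determine the positive recurrence condition. I would begin by writing down the master equation $\sum_\eta \mu(\eta) L^{\alpha,l,r} f(\eta) = 0$ for the generator \eqref{gen_open}, or equivalently check that $\mu^{\alpha,l,r}$ is invariant by verifying the balance relations site by site.

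First I would exploit the product structure. For a product measure $\mu = \bigotimes_x \theta_{\lambda(x)/\alpha(x)}$ with zero-range marginals \eqref{eq:theta-lambda}, the key algebraic fact is that $\theta_{\mu}$ satisfies $g(n)\theta_\mu(n) = \mu\,\theta_\mu(n-1)$, so that the expected jump rate out of site $x$ equals $\alpha(x)\cdot(\lambda(x)/\alpha(x)) = \lambda(x)$ by \eqref{mean_rate}. This turns the nonlinear zero-range balance into a \emph{linear} flow-conservation condition on the effective rates $\lambda(x)$. The invariance of the product measure then reduces to requiring that the $\lambda(x)$ solve the traffic equations of the Jackson network: at each interior site the net flux of the underlying random walk must balance, i.e. $\lambda(x) = p\,\lambda(x-1) + q\,\lambda(x+1)$ for $l<x<r$, together with the boundary injection/absorption conditions coming from the source terms $p\alpha(l)$, $q\alpha(r)$ and the sink terms at $l+1$, $r-1$. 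I would check that \eqref{def_lambda_inv} is precisely the solution of this linear second-order recurrence: the general solution is $A(q/p)^{x} + B$, and matching the two boundary values $\lambda(l)=\alpha(l)$, $\lambda(r)=\alpha(r)$ forces the constants in \eqref{def_lambda_inv}. This is a routine but necessary computation.

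Second, for positive recurrence, the point is that each one-site marginal $\theta_{\lambda(x)/\alpha(x)}$ is a genuine probability measure (summable) if and only if $\lambda(x)/\alpha(x) < 1$, i.e. $\lambda(x)<\alpha(x)$, which is exactly \eqref{cond_rec}. Since $Z(\lambda(x)/\alpha(x))<\infty$ precisely in this regime (recall $g_\infty=1$, so $\theta_\nu$ is summable iff $\nu<1$), the product measure is normalizable, finite, and invariant, hence the chain is positive recurrent; conversely, if \eqref{cond_rec} fails at some site the candidate stationary measure ceases to be normalizable, and one argues the chain is transient or null-recurrent. Uniqueness follows from irreducibility of the finite-range chain on $\N^{(l,r)\cap\Z}$ together with positive recurrence.

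\textbf{The main obstacle} I expect is not the product-form ansatz itself but carefully justifying the boundary balance at the two edge sites $l+1$ and $r-1$, where the generator \eqref{gen_open} mixes a source term (injecting at rate $p\alpha(l)$ or $q\alpha(r)$, independent of the configuration) with a zero-range exit term (leaving at the configuration-dependent rate $g(\eta)$). One must check that the product measure balances these inhomogeneous boundary dynamics, and that the effective rate $\lambda(x)$ entering $\theta_{\lambda(x)/\alpha(x)}$ at these sites is consistent with the interior recurrence when the boundary conditions $\lambda(l)=\alpha(l)$ and $\lambda(r)=\alpha(r)$ are imposed as the ``ghost'' values at the reservoir sites $l$ and $r$. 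Verifying that the source at $l$ behaves as a reservoir at effective fugacity $\alpha(l)$ (and symmetrically at $r$) is the conceptually delicate step; once the traffic equations are set up correctly, the rest is the standard Jackson-network verification adapted to the zero-range (rather than $M/M/1$) service discipline.
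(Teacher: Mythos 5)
Your proposal matches the paper's proof in essence: the paper likewise verifies that \eqref{def_lambda_inv} is the unique solution of the traffic equations $\lambda(x)=p\lambda(x-1)+q\lambda(x+1)$ with the boundary relations $\lambda(l+1)=p\alpha(l)+q\lambda(l+2)$ and $\lambda(r-1)=p\lambda(r-2)+q\alpha(r)$ (i.e.\ ghost values $\lambda(l)=\alpha(l)$, $\lambda(r)=\alpha(r)$), and then invokes the standard open Jackson network result (citing \cite{par}) for both the positive-recurrence criterion and the product-form invariant measure --- exactly the content you reconstruct via $g(n)\theta_\nu(n)=\nu\,\theta_\nu(n-1)$ and normalizability of $\theta_{\lambda(x)/\alpha(x)}$ iff $\lambda(x)<\alpha(x)$. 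The only quibble is that the solution basis of the linear recurrence is $\{1,(p/q)^x\}$ rather than $\{1,(q/p)^x\}$ (note $(q/p)^{r-x}$ is a constant times $(p/q)^x$), a slip that does not affect the argument.
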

\begin{proof}{Lemma}{lem-mu_c'}
An explicit computation shows that $\lambda^{\alpha,l,r}(.)$ given in  
\eqref{def_lambda_inv} is the unique solution to the following system:
\begin{eqnarray}
\lambda(x) & = & p\lambda(x-1)+q\lambda(x+1)\mbox{ if } x\in (l+1,r-1)\cap\Z\label{jackson_bulk}\\
\lambda(l+1) & = & p\alpha(l)+q\lambda(l+2)\label{jackson_left}\\
\lambda(r-1) & = & p\lambda(r-2)+q\alpha(r)\label{jackson_right}
\end{eqnarray}
A standard result in queuing theory (see e.g. \cite{par}) states that  the process 
with generator \eqref{gen_open} is positive recurrent if and only if this solution 
satisfies condition \eqref{cond_rec}, and that in this case, it  has  as unique invariant measure $\mu^{\alpha,l,r}$. 
\end{proof}
\mbox{}\\ \\
We can now conclude the\\ \\
{\em Proof of the upper bound \eqref{upper_bound}.} Recall that 
 the quenched process $(\eta^\alpha_t)_{t\geq 0}$ 
in \eqref{upper_bound} has  initial 
configuration $\eta_0\in\N^\Z$, and  generator \eqref{generator}.
For $\varepsilon>0$, let 
\begin{eqnarray}\label{def:ell}
A_\varepsilon:=A_\varepsilon(\alpha)&=&\max\{x\le 0:\alpha(x)\leq c+\varepsilon\}\\
a_\varepsilon:=a_\varepsilon(\alpha)&=&\min\{x\ge 0:\alpha(x)\leq c+\varepsilon\}\label{def:err}
\end{eqnarray}
 We can regard $a_\varepsilon$ and $A_\varepsilon$ as positions of potential bottlenecks since the flux across these points is
close to the maximum uniformly on all configurations. Thanks to assumption \eqref{not_too_sparse}, the set in \eqref{def:ell} is never empty.
In contrast, the set in \eqref{def:err} may be empty, in which case, by the usual convention, $a_\varepsilon(\alpha)$ is set to $+\infty$.
It follows from definition \eqref{def:ell} that
\be\label{location}
\lim_{\varepsilon\to 0}A_\varepsilon=-\infty
\ee 
Set
\be\label{def:l-r}
 l:=A_\varepsilon(\alpha),\quad r:=a_\varepsilon(\alpha)\indicator{\{a_\varepsilon(\alpha)<+\infty\}}
+\varepsilon^{-1}\indicator{\{a_\varepsilon(\alpha)=+\infty\}}
 \ee
We define $r'\in\Z$, $\alpha'\in{\bf A}$, a $[0,c]$-valued function $\lambda_\varepsilon(.)$ 
on $(l,r')$ and a probability measure 
$\mu_\varepsilon$ on $\N^{(l,r')}$ as follows,  so that $\mu_\varepsilon$ 
is an invariant measure for $L^{\alpha',l,r'}$. \\ \\
{\em First case.}
Condition \eqref{cond_rec} is satisfied, thus the measure $\mu^{\alpha,l,r}$ of 
Lemma \ref{lem-mu_c'} is well defined.
This is true in particular if $a_\varepsilon(\alpha)<+\infty$, see \eqref{def_lambda_inv}. 
We set $r':=r$, $\alpha'=\alpha$, $\lambda_\varepsilon(.)=\lambda^{\alpha,l,r}(.)$ and $\mu_\varepsilon:=\mu^{\alpha,l,r}$.
\\ \\
{\em Second case.}
Condition \eqref{cond_rec} is not satisfied. We then set
\be\label{def_right_reservoir}
r'=r'(\alpha,l,r):=\min\{x>l:\,\lambda^{\alpha,l,r}(x)>\alpha(x)\}
\ee
and define a modified environment $\alpha'=\alpha'(\alpha,l,r)$ by setting
\be\label{modif_alpha}
\alpha'(x)=\left\{
\ba{lll}
\alpha(x) & \mbox{if} & x\neq r'\\
\lambda^{\alpha,l,r}(x) & \mbox{if} & x=r'
\ea
\right.
\ee
Since $\alpha(.)$ and $\lambda^{\alpha,l,r}(.)$  satisfy 
\eqref{jackson_bulk}--\eqref{jackson_right}, by construction, $\alpha'(.)$  and the restriction of 
$\lambda^{\alpha,l,r}(.)$ 
to $(l,r')\cap\Z$ still satisfy these equalities, and we define $\lambda_\varepsilon(.)$ 
as this restriction. Thus $\mu_\varepsilon:=\mu^{\alpha',l,r'}$, is an invariant measure 
for $L^{\alpha',l,r'}$.  \\ \\
Define
the initial configuration
$\overline{\eta}_0\in{\bf X}$  by $\overline{\eta}_0(x)=\eta_0(x)$ 
for all $x\not\in\{l,r'\}$, $\overline{\eta}_0(l)=\overline{\eta}_0(r')=+\infty$.
As above for Lemma \ref{lem-mu_c'}, $(\overline{\eta}_t^{\alpha})_{t\ge 0}$ 
denotes the process with generator \eqref{generator} and initial configuration 
$\overline{\eta}_0$,  and $(\eta^{\alpha,l,r'}_t)_{t\geq 0}$ its restriction 
to $(l,r')$. Recall from the above preliminary that $(\eta^{\alpha,l,r'}_t)_{t\geq 0}$ is
a Markov process
with generator $L^{\alpha,l,r'}$ defined by \eqref{gen_open}.
Since $\eta_0\leq\overline{\eta}_0$, by attractiveness, we have $\eta_t^\alpha\leq\overline{\eta}_t^\alpha$.
Let
$(\eta_t^{\alpha',l,r'})_{t\ge 0}$ be the process with generator $L^{\alpha',l,r'}$ 
defined by \eqref{gen_open}, and whose initial configuration 
is the restriction of $\eta_0$ to $(l,r')$. 
This process converges in distribution as $t\to+\infty$ to its invariant measure 
$\mu_\varepsilon$ defined above.
By attractiveness, and the fact that the entrance rate $q\alpha(r')$ at $r'$ in 
 $L^{\alpha',l,r'}$ has been increased with respect to that of $L^{\alpha,l,r'}$, 
 we have that $\eta_t^{\alpha,l,r'}\leq\eta_t^{\alpha',l,r'}$.\\ \\
{}From \eqref{def_lambda_inv} it follows that $\lambda_\varepsilon(x)$ in a finite neighbourhood of 0
can be made arbitrarily close to $\alpha(l)$ by choosing $\varepsilon$ appropriately small. This
in turn implies that $r'$ goes to infinity as $\varepsilon$ goes to 0 in both first and second cases.
Thus for $\varepsilon$ small enough, the support of $h$ is contained in $(l,r')$.
Since $h$ is nondecreasing with support in $(l,r)$, we then have
\begin{eqnarray*}
\label{eq:basdepage}
\limsup_{t\to \infty} \Exp h(\eta_t^\alpha) & \le & \lim_{t\to \infty} \Exp h(\overline{\eta}_t^\alpha)= \lim_{t\to \infty} \Exp h(\eta_t^{\alpha,l,r'})\\
& \leq & \lim_{t\to \infty} \Exp h(\eta_t^{\alpha',l,r'})=\int_{\bf X} h(\eta)d\mu_\varepsilon(\eta)
\end{eqnarray*}
Since as $\varepsilon$ goes to 0, $\alpha(A_\varepsilon)$ goes to $c$, we have 
 that $\lambda_\varepsilon(x)$ converges uniformly to $c$ 
on every finite subset of $\Z$.  Hence,
$$
\lim_{\varepsilon\to 0}\int_{\bf X} h(\eta)d\mu_\varepsilon(\eta)=\int_{\bf X} h(\eta)d\mu^\alpha_c(\eta)
$$
This concludes the proof of Theorem \ref{proposition_afgl}. \hfill\mbox{$\square$}
\section{Proof of Theorem \ref{prop_nec}}\label{sec_nec}
Let $y\in\Z\cap(-\infty,0)$.
By Corollary \ref{corollary_consequence} (with $z=0$ and $\zeta=\eta_0$),
\be\label{semi_vari}
\Gamma_0(t,\eta_0)\leq\Gamma_0(t,\eta^{*,y})+\sum_{x=y+1}^0\eta_0(x)
\ee   
Let $z\in(0,+\infty)$, and $(t_n)_{n\in\N}$ denote a sequence of nonnegative real numbers such that
$$
\lim_{n\to+\infty}t_n=+\infty,\quad
\lim_{n\to\infty}(t_n z)^{-1}\sum_{x=[-t_n z]}^0\eta_0(x)=\rho
$$
where $\rho<\rho_c$ 
is given in \eqref{eq_non16}. 
Let  $y_n:=\lfloor -t_n z\rfloor$. 
Taking quenched expectation of \eqref{semi_vari} yields
\be\label{quenched_exp}
\Exp\left\{t_n^{-1}\Gamma_0(t_n,\eta_0)\right\}\leq\Exp\left\{t_n^{-1}\Gamma_0\left(t_n,
\eta^{*,y_n}\right)\right\}+t_n^{-1}\sum_{x=\lfloor-t_n z\rfloor}^0\eta_0(x)
\ee
By  Proposition \ref{th_strong_loc_eq},
\be\label{density-flux}
\lim_{n\to+\infty}\Exp\left\{t_n^{-1}\Gamma_0\left(t_n,\eta^{*,y_n}\right)\right\}=f^*(z)
\ee
where $f^*$ is the Legendre transform of $f$ defined by \eqref{lagrangian}.
Passing  to the limit in \eqref{quenched_exp} as $n\to+\infty$,
we obtain
$$
\limsup_{n\to+\infty}\Exp\left\{t_n^{-1}\Gamma_0(t_n,\eta_0)\right\}\leq \rho z+f^*(z)
$$
Since the above is true for every $z>0$, we have 
\be\label{upper_bound_current}
\liminf_{t\to+\infty}\Exp\left\{t^{-1}\Gamma_0(t,\eta_0)\right\}\leq\inf_{z>0}[ \rho z+f^*(z)]=\hat{f}(\rho)
\ee
where $\hat{f}$ is the concave envelope of $f$ defined in \eqref{fdoublestar}. Note that the infimum in \eqref{upper_bound_current} is equal to the one
in \eqref{fdoublestar} by continuity of $f^*$. Since $f$ is strictly increasing, we have that $\hat{f}(\rho)<\hat{f}(\rho_c)=f(\rho_c)=(p-q)c$.
We have thus shown that
\be\label{subcritical_current}
\liminf_{t\to+\infty}\Exp\left\{t^{-1}\Gamma_0(t,\eta_0)\right\}<(p-q)c
\ee  
Now assume that the conclusion of the proposition is false, i.e. that 
$\eta_t^\alpha$ converges in distribution to $\mu^\alpha_c$ as $t\to+\infty$.
Since  
$$
\Exp\left\{t^{-1}\Gamma_0(t,\eta_0)\right\}=t_n^{-1}\int_0^{t}\Exp\left\{
p\alpha(0)g(\eta_s(0))-q\alpha(1)g(\eta_s(1))
\right\}ds
$$
it would follow that
$$
\lim_{t\to+\infty}\Exp\left\{t^{-1}\Gamma_0(t,\eta_0)\right\}=\int\left\{
p\alpha(0)g(\eta(0))-q\alpha(1)g(\eta(1))
\right\}d\mu^\alpha_c(\eta)=(p-q)c
$$
 which contradicts \eqref{subcritical_current}.
\section{Proof of Theorem \ref{prop_counter}}\label{sec:proof_counter}
By assumption  \eqref{not_too_sparse},  there exists a decreasing sequence 
$(X_k)_{k\in\N}$   of negative integers such that
\be\label{limalpha}
\lim_{k\to+\infty}\alpha(X_k)=c
\ee
Given this sequence, we construct a decreasing sequence $(x_n)_{n\in\N}$ of negative integers
such that $(x_n)_{n\in\N}$ is a subsequence of $(X_k)_{k\in\N}$, and 
\be\label{lim_an}\lim_{n\to+\infty}\frac{x_n}{\delta_n}=0
\ee
where 
\be\label{induction_2}\delta_n:=x_{n+1}-x_n\ee
Set $x_0=0$ and pick $t_0>0$, then set $\delta_0=(1+V)t_0$, where $V$ is a 
finite propagation speed constant given for this kernel $p(.)$ by Lemma
\ref{lemma_finite_prop}.
Then for every $n\in\N$, define 
\begin{eqnarray}
\label{induction_1} t_{n+1} & = & \frac{\delta_{n+1}}{1+V}
\end{eqnarray}
Let $\eta_0\in{\bf X}$ be the particle configuration defined by 
\be\label{def_conf}\eta_0(x)=\left\{
\ba{lll}
0 & \mbox{ if } & x\not\in\{x_n,\,n\geq 1\}
\ea
\right.
\ee
where the sequence $(y_n)_{n\in\N}$ is defined as follows: 
\be\label{def_spike}
y_n=\left\{
\ba{lll}
\lfloor \rho_c \delta_n\rfloor & \mbox{if} & \rho_c<+\infty\\
\lfloor \rho_n \delta_n \rfloor & \mbox{if} & \rho_c=+\infty
\ea
\right.
\ee
and $(\rho_n)_{n\in\N}$ (in the case $\rho_c=+\infty$) is a sequence satisfying
\be\label{cond_spike}
\rho_n\in [0,+\infty),\quad\lim_{n\to+\infty}\rho_n=+\infty,\quad \lim_{n\to+\infty}\rho_n\frac{x_n}{t_n}=0
\ee
Let $\eta^n_0$ be the truncated particle configuration defined by
\be
\label{def_eta_n}\eta^n_0(x)=\left\{
\ba{lll}\eta_0(x) & \mbox{ if } & x\geq x_n\\ 
0  & \mbox{ if } & x<x_n
\ea
\right.
\ee
We denote respectively by $\eta_t^\alpha$ and $\eta^{\alpha,n}_t$ 
the evolved zero-range processes starting from $\eta_0$ and $\eta^n_0$.
By finite propagation, \eqref{induction_2} and \eqref{def_eta_n}, 
there exists a vanishing sequence $(p_n)_{n\in\N}$ with values in $[0,1]$, 
such that the $\Prob$-probability of the event 
$$ 
\eta_t^\alpha(x)=\eta^{\alpha,n}_t(x),\quad \forall t\in[0,t_n],\,\forall x\geq x_n
$$
is bounded below by $1-p_n$, uniformly with respect to $\alpha\in{\bf A}$.
On the other hand, for $\eta^n_t$, because $p(.)$ is totally asymmetric 
and there are initially no particles to the left of $x_n$, we have the following bound:
\be\label{bound_eta_n}
\Gamma_0(t_n,\eta^n_0)\leq N^{x_n}_{t_n}+\sum_{x>x_n}\eta^n_0(x)=N^{x_n}_{t_n}+\sum_{x> x_n}\eta_0(x)
\ee
where $(N^x_t)_{t\geq 0}$ denotes the Poisson process of potential jumps 
to the right from $x$ in the Harris construction, that is a Poisson process with rate 
$$\lambda(x):=\alpha(x)\sum_{z>0}p(z)=\alpha(x)$$
Note that definitions \eqref{def_conf}--\eqref{def_spike} and assumption \eqref{cond_spike} imply
\be\label{limit_density}
\lim_{n\to+\infty}\frac{1}{x_n}\sum_{x>x_n}\eta_0(x)=\lim_{n\to+\infty}\frac{1}{x_n}\sum_{i=0}^{n-1} y_i=\rho_c,
\ee
Hence the supercriticality condition \eqref{cond_init} is satisfied by $\eta_0$.
The bound \eqref{bound_eta_n}, combined with the law of large numbers for  Poisson processes,  
\eqref{lim_an}, \eqref{induction_1},  \eqref{limit_density} and \eqref{limalpha}, yields that the limit 
\be\label{limcurrent_n}
\limsup_{n\to+\infty}\Exp\left\{
t_{n}^{-1}\Gamma_0(t_{n},\eta^n_0)
\right\}\leq \lim_{n\to+\infty}\Exp\left\{
t_{n}^{-1}N^{x_{n}}_{t_{n}}
\right\}=c\sum_{z>0}p(z)=c
\ee
holds $\Prob$-a.s. The finite propagation lemma applied to $\eta_t$ and $\eta^n_t$ on $[0,t_n]$, 
together with \eqref{limcurrent_n}, implies 
$$ 
\limsup_{n\to+\infty}\Exp\left\{
t_{n}^{-1}\Gamma_0(t_{n},\eta_0)
\right\} \leq c\sum_{z>0}p(z)=c
$$
But 
\be\label{exp_current}
\Exp\left\{
t_{n}^{-1}\Gamma_0(t_{n},\eta_0)
\right\} = t_{n}^{-1}\int_0^{t_{n}}
\sum_{x\leq 0}\sum_{z>-x}
p(z)\alpha(x)g\left(\eta^\alpha_t(x)\right)
dt
\ee
 It follows that $\eta^\alpha_t$ cannot converge in distribution to 
 $\mu^\alpha_c$ as $t\to+\infty$, for if this were true, the r.h.s. of \eqref{exp_current} would converge to
$$
\int_{\bf X}
\sum_{x\leq 0}\sum_{z>-x}
p(z)\alpha(x)g\left(\eta(x)\right)
d\mu^\alpha_c(\eta)=c\sum_{z\in\Z}zp(z)>c$$
\section{The lower bound}\label{sec_ideas}
The lower bound of Proposition \ref{prop_main_work} can be established 
along the following lines, see \cite{bmrs2} for (lengthy) details of this scheme. 
The supercriticality condition \eqref{cond_init} is shown in a first step to imply 
that locally around the origin, our process $(\eta_s^\alpha)_{s\geq 0}$ dominates 
the source process $(\eta^{\alpha,t,\beta}_s)_{s\geq 0}$
with initial configuration defined by \eqref{def_init_source}, with 
$x_t:=\lfloor\beta t\rfloor$, and $\beta<0$. This holds in the following sense.
Recalling the definition  \eqref{def:ell} of $A_\varepsilon$, we have that 
\begin{lemma}\label{lemma_asymptotic_order}
For any $\beta<-v_0$, 
$$
\lim_{\varepsilon\to 0}\liminf_{t\to+\infty}\Prob\left(
\left\{
\eta_t^\alpha(x)\geq\eta^{\alpha,t,\beta}_t(x),\,\forall x\geq A_\varepsilon(\alpha)
\right\}
\right)=1
$$
\end{lemma}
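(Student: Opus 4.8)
The plan is to couple $(\eta^\alpha_s)$ and the source process $(\eta^{\alpha,t,\beta}_s)$ on one Harris system and to compare them on the half-line to the right of the bottleneck $A_\varepsilon(\alpha)$, the comparison being driven by a gap of order $t$ between the currents the two processes send across $A_\varepsilon$. First I would control the source. Its reservoir sits at the fixed site $x_t=\lfloor\beta t\rfloor$ with $\beta<-v_0$, and $A_\varepsilon$ is a fixed negative site, so $A_\varepsilon$ is seen from the source at relative speed $v=(A_\varepsilon-x_t)/t\to-\beta>v_0$. Because the source is empty to the right of $x_t\le A_\varepsilon$, \eqref{current} gives $\Gamma_{A_\varepsilon}(t,\eta^{\alpha,t,\beta}_0)=\sum_{x>A_\varepsilon}\eta^{\alpha,t,\beta}_t(x)$, whence \eqref{hdl_source} yields $t^{-1}\Gamma_{A_\varepsilon}(t,\eta^{\alpha,t,\beta}_0)\to f^*(-\beta)$. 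Since $-\beta>v_0$, Lemma \ref{lemma_entropy}(ii) gives $\lambda^-(-\beta)<\lambda_0\le c$, hence $f^*(-\beta)<(p-q)c$ (the maximizer in \eqref{lagrangian} being $\lambda^-(-\beta)<c$): the source current falls strictly short of the maximal current $(p-q)c$. Consistently, the source front travels at speed $v_0$ and reaches only $(\beta+v_0)t\to-\infty$, never overtaking $A_\varepsilon$ on $[0,t]$, so the source stays subcritical on all of $[A_\varepsilon,+\infty)$.

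Next I would bound $\eta^\alpha$ from below. By \eqref{cond_init}, once $\varepsilon$ is small enough that $A_\varepsilon$ lies where the liminf is effective, every window $[-Ct,A_\varepsilon]$ is filled by $\eta_0$ to density at least $\rho_c-\delta$. Truncating $\eta_0$ to the left of $-Ct$ can only decrease the current: for the truncation $\eta_0^{\mathrm{tr}}$ (equal to $\eta_0$ on $[-Ct,+\infty)$, empty further left) the supremum on the right of \eqref{current_comparison} vanishes, so $\Gamma_{A_\varepsilon}(t,\eta_0)\ge\Gamma_{A_\varepsilon}(t,\eta_0^{\mathrm{tr}})$. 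For $C$ large the block $[-Ct,A_\varepsilon]$ carries far more mass than the bottleneck can drain in time $t$, so by the hydrodynamics of a supercritical block its right edge sustains a current at least $f(\rho_c-\delta)$ throughout $[0,t]$. Letting $\delta\downarrow0$ gives $\liminf_{t\to\infty}t^{-1}\Gamma_{A_\varepsilon}(t,\eta_0)\ge(p-q)c$, strictly above the source rate $f^*(-\beta)$ of the first step.

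Finally I would promote this to the pointwise ordering $\eta_t^\alpha(x)\ge\eta_t^{\alpha,t,\beta}(x)$ for all $x\ge A_\varepsilon$. The two processes run under the same clocks on $(A_\varepsilon,+\infty)$ and are ordered there at time $0$ (the source is empty to the right of $x_t\le A_\varepsilon$), so the ordering can be broken only through the flux exchanged at $A_\varepsilon$; by Lemma \ref{lemma_finite_prop} the distant source reservoir cannot influence a bounded window before time $t$ outside probability $e^{-bt}$. One then monitors the labelled particles, as in the proof of Lemma \ref{lemma_current}, and uses the strictly larger cumulative input of $\eta^\alpha$ across $A_\varepsilon$ to keep the two labellings ordered up to time $t$; sending $t\to+\infty$ and then $\varepsilon\to0$ (so $A_\varepsilon\to-\infty$ and $\alpha(A_\varepsilon)\to c$) gives the claim. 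I expect this last step to be the crux: a comparison of time-integrated currents does not by itself force an instantaneous configuration ordering, and one must both control the boundary exchange at $A_\varepsilon$ uniformly over $s\in[0,t]$ and bootstrap the site-wise domination leftward into the supercritical region — the lengthy part carried out in \cite{bmrs2}.
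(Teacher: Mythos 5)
First, a point of comparison: the paper itself does not prove Lemma \ref{lemma_asymptotic_order}; it is stated as part of the scheme for Proposition \ref{prop_main_work} and its proof is explicitly deferred to \cite{bmrs2}, so your proposal has to stand on its own. It correctly assembles the right ingredients (common Harris coupling, the bottleneck $A_\varepsilon$, the source hydrodynamics \eqref{hdl_source}, supercriticality of $\eta_0$), but the step that \emph{is} the lemma is missing. Your mechanism for the sitewise ordering is ``the strictly larger cumulative input of $\eta^\alpha$ across $A_\varepsilon$'', and this cannot work as stated: an order-$t$ surplus in the time-integrated current across the single site $A_\varepsilon$ controls, via \eqref{current}, only the difference of tail sums $\sum_{x>A_\varepsilon}\eta^\alpha_t(x)-\sum_{x>A_\varepsilon}\eta^{\alpha,t,\beta}_t(x)$, and a positive aggregate difference says nothing about the sign of $\eta^\alpha_t(x)-\eta^{\alpha,t,\beta}_t(x)$ at individual sites. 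The natural coupling route (run the process from $\eta_0\vee\eta^{\alpha,t,\beta}_0$, which dominates both by attractiveness, and show that none of its discrepancies with $\eta^\alpha$ survives in $[A_\varepsilon,+\infty)$ at time $t$) would require the two currents to agree up to $o(1)$, not merely that one exceed the other by $O(t)$; producing that is the hard part carried out in \cite{bmrs2}. Your appeal to Lemma \ref{lemma_finite_prop} here is also incorrect: that lemma only bounds the influence speed by constants $W>1$, whereas $|\beta|$ may be any number larger than $v_0$, possibly much smaller than $1$, so the reservoir at $\lfloor\beta t\rfloor$ can perfectly well affect a fixed window by time $t$ --- indeed it must, otherwise the source would remain empty near the origin and Lemma \ref{lemma_asymptotic_source} would be vacuous.

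Step 2 also asserts rather than proves its conclusion. The Ces\`aro liminf \eqref{cond_init} controls the cumulative mass of $\eta_0$ on $[-n,0]$ for all large $n$, not the profile inside a window, so ``every window $[-Ct,A_\varepsilon]$ is filled to density at least $\rho_c-\delta$'' overstates what you have (the mass could be concentrated near $-Ct$); and the claim that such a block ``sustains a current at least $f(\rho_c-\delta)$ at its right edge throughout $[0,t]$'' is precisely the supercritical hydrodynamic input that Section 3.4 says is \emph{not} available from \cite{bfl} or \cite{ks} and is one of the main results of \cite{bmrs2}. Taking $C$ large moreover works against you: by finite propagation, mass at distance $Ct$ cannot reach $A_\varepsilon$ by time $t$, so the correct statement is a Lax--Oleinik-type bound $\liminf_{t}t^{-1}\Gamma_{A_\varepsilon}(t,\eta_0)\geq\inf_{v>0}[\rho_c v+f^*(v)]=\hat{f}(\rho_c)=(p-q)c$, obtained by comparing against windows $[-vt,A_\varepsilon]$ for every fixed $v$ (mirroring the proof of Theorem \ref{prop_nec}), not against a single large block. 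Your Step 1 on the source current is essentially fine. Overall: right skeleton, but the two load-bearing steps (the supercritical current lower bound and, above all, the passage from current comparison to sitewise domination) are not supplied.
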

Due to \eqref{location}, Lemma \ref{lemma_asymptotic_order} implies domination 
in any fixed neighbourhood of the origin.
The validity of Lemma \ref{lemma_asymptotic_order} does not depend on the actual 
choice of $\beta<-v_0$, but this choice does matter to ensure
that the comparison source process itself is close to the critical measure 
$\mu^\alpha_c$ near the origin. This latter property, combined with 
Lemma \ref{lemma_asymptotic_order}, will imply Proposition \ref{prop_main_work}. 
It turns out that the relevant choice is to take $\beta$ arbitrarily close to $-v_0$. Precisely, we have that
\begin{lemma}
\label{lemma_asymptotic_source}
For any bounded local increasing function $h:\N^\Z\to\R$,
$$
\lim_{\beta\to -v_0}\liminf_{t\to+\infty}\Exp h(\eta^{\alpha,t,\beta}_t)\geq\int_{\bf X}h(\eta)d\mu^\alpha_c(\eta)
$$
\end{lemma}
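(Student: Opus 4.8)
The plan is to prove Lemma \ref{lemma_asymptotic_source} by combining the strong local equilibrium statement \eqref{the_first_one} of Proposition \ref{th_strong_loc_eq} with the first part of Lemma \ref{lemma_entropy}, exploiting the fact that we evaluate the source process precisely at the origin. The point is that the source process $(\eta^{\alpha,t,\beta}_s)_{s\geq 0}$ has its source at $x_t=\lfloor\beta t\rfloor$, so observing the process near the origin means observing it at a macroscopic speed $v=-\beta$ away from the source (since the displacement from the source is $0-x_t\approx -\beta t=(-\beta)t$). For $\beta<-v_0$, we have $-\beta>v_0$; but we are letting $\beta\to -v_0$, hence $v=-\beta\to v_0$ from above. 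By Lemma \ref{lemma_entropy}(ii), $\lim_{v\downarrow v_0}\lambda^-(v)=\lambda_0$, and under assumption (H) the minimizer in \eqref{def_v0} is achieved only for $\lambda\uparrow c$, so $\lambda_0=c$. Thus the local equilibrium parameter converges to the critical value $c$ exactly as $\beta\to -v_0$, which is what produces the limiting measure $\mu^\alpha_c$.

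First I would fix $\beta<-v_0$ and set $v:=-\beta\in(v_0,+\infty)$. I would then apply \eqref{the_first_one} with this $v$ and with the observation point taken at the origin; concretely, since $x_t+vt=\lfloor\beta t\rfloor+(-\beta)t\to 0$ (up to bounded error), the shifted environment $\tau_{\lfloor x_t+vt\rfloor}\alpha$ is asymptotically $\alpha$ itself near the origin. The statement \eqref{the_first_one} then gives, for any bounded local increasing $h$,
$$
\liminf_{t\to\infty}\left\{\Exp h\left(\eta_t^{\alpha,t,\beta}\right)-\int_{\bf X} h(\eta)\,d\mu_{\lambda^-(v)}^{\alpha}(\eta)\right\}\geq 0,
$$
after absorbing the negligible shift. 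Rearranging, $\liminf_{t\to\infty}\Exp h(\eta_t^{\alpha,t,\beta})\geq\int h\,d\mu^\alpha_{\lambda^-(v)}$. Finally, taking $\beta\to -v_0$, i.e. $v\downarrow v_0$, and using $\lambda^-(v)\to\lambda_0=c$ (Lemma \ref{lemma_entropy}(ii) under (H)) together with continuity of $\lambda\mapsto\int h\,d\mu^\alpha_\lambda$ (the one-site marginals $\theta_{\lambda/\alpha(x)}$ depend continuously on $\lambda$, and $h$ is local and bounded), I would conclude the desired inequality with $\mu^\alpha_c$ on the right.

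The main obstacle I anticipate is handling the interchange between the observation position and the shift operator cleanly. The local equilibrium \eqref{the_first_one} is stated with $h$ applied to $\tau_{\lfloor x_t+vt\rfloor}\eta_t^{\alpha,t}$ and the comparison measure carried on the shifted environment $\tau_{\lfloor x_t+vt\rfloor}\alpha$. To translate this into a statement about $h(\eta_t^{\alpha,t,\beta})$ observed near a \emph{fixed} origin with comparison measure $\mu^\alpha_c$, one must verify that the drift of the source's reference point and the origin differ by only $o(t)$ (in fact $O(1)$), so that the relevant local window and local environment are asymptotically those at the origin. Care is needed because $\lfloor\beta t\rfloor+vt$ is bounded but not exactly zero, and $h$ is local; one must argue that the bounded spatial offset does not affect the limiting expectation, which follows since $h$ sees only finitely many coordinates and the environment $\alpha$ near the origin is fixed. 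The other delicate point is ensuring the limits $t\to\infty$ and $\beta\to -v_0$ are taken in the correct order: I take the $\liminf$ in $t$ first for each fixed $\beta<-v_0$, then send $\beta\uparrow -v_0$, so that Lemma \ref{lemma_entropy}(ii), which governs the one-sided limit $v\downarrow v_0$, applies directly and assumption (H) forces $\lambda_0=c$ rather than some strictly smaller value.
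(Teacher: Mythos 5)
Your proposal is correct and follows essentially the same route as the paper: apply the strong local equilibrium statement \eqref{the_first_one} at the origin (i.e.\ at speed $v=-\beta$ from the source) to get $\liminf_t \Exp h(\eta_t^{\alpha,t,\beta})\geq\int h\,d\mu^\alpha_{\lambda^-(-\beta)}$, then send $\beta\uparrow -v_0$ and invoke Lemma \ref{lemma_entropy}(ii) together with the observation that (H) forces $\lambda_0=c$. Your added care about the $O(1)$ offset between $\lfloor x_t+vt\rfloor$ and the origin and the continuity of $\lambda\mapsto\int h\,d\mu^\alpha_\lambda$ only makes explicit what the paper leaves implicit.
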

\begin{proof}{Lemma}{lemma_asymptotic_source} For given $\beta<-v_0$, by Proposition \ref{th_strong_loc_eq},
$$
\liminf_{t\to+\infty}\Exp h(\eta^{\alpha,t,\beta}_t)\geq\int_{\bf X}h(\eta)d\mu^\alpha_{\lambda^-(-\beta)}(\eta)
$$
Letting $\beta\to -v_0$, by statement (ii) of Lemma \ref{lemma_entropy}, we have
$$
\lim_{\beta\uparrow -v_0}\int_{\bf X}h(\eta)d\mu^\alpha_{\lambda^-(-\beta)}(\eta)=\int_{\bf X}h(\eta)d\mu^\alpha_{\lambda_0}(\eta)
$$
But Assumption (H) is exactly the necessary and sufficient condition to have $\lambda_0=c$.
\end{proof}\\ \\
Proposition \ref{prop_main_work} then follows from Lemmas \ref{lemma_asymptotic_order} and \ref{lemma_asymptotic_source}.\\ \\
\noindent {\bf Acknowledgments:}
This work was partially supported by ANR-2010-BLAN-0108, PICS 5470, FSMP,
and grant number 281207 from the
Simons Foundation awarded to K. Ravishankar.
We thank 
Universit\'{e}s Blaise Pascal and Paris Descartes, EPFL
and SUNY College at New Paltz, for hospitality.
\end{document}